\let\OLDthebibliography\thebibliography
\renewcommand\thebibliography[1]{
  \OLDthebibliography{#1}
  \setlength{\parskip}{0pt}
  \setlength{\itemsep}{1pt plus 0.3ex}
}
\newtheorem{theorem}{Theorem}[section]
\newtheorem{corollary}[theorem]{Corollary}
\newtheorem{proposition}[theorem]{Proposition}
\newtheorem{lemma}[theorem]{Lemma}
\newtheorem{conjecture}[theorem]{Conjecture}
\theoremstyle{definition}
\newtheorem{definition}[theorem]{Definition}
\newtheorem{example}[theorem]{Example}
\newtheorem{question}[theorem]{Question}
\numberwithin{equation}{section}
\newcommand*{\R}{\mathbb{R}}
\newcommand*{\Z}{\mathbb{Z}}
\newcommand*{\N}{\mathbb{N}}
\newcommand*{\h}{H_{\scaleobj{0.9}{\square}}}
\DeclareMathOperator{\smallmod}{mod}
\DeclareMathOperator{\conv}{conv}
\DeclareMathOperator{\bx}{\mathsf{box}}
\def\blankfootnote{\xdef\@thefnmark{}\@footnotetext}
\newcommand{\inserttitle}{discrete quantitative helly-type theorems with boxes} 
\title{\vspace{-1.5em} \inserttitle}
\author{Travis Dillon}
\date{\today}
\begin{document}


\thispagestyle{firstpage}

\makeatletter

\null
\vspace{-2.5em}

\noindent
    \vbox{%
        \hsize\textwidth
        \linewidth\hsize
        {
          \hrule height 2\p@
          \vskip 0.25in
          \vskip -\parskip%
        }
        \centering
        {\LARGE\sc discrete quantitative helly-type\\ theorems with boxes\par}
        {
          \vskip 0.26in
          \vskip -\parskip
          \hrule height 2\p@
          \vskip 0.10in%
        }
    }
\makeatother
\begin{center}\large
    Travis Dillon\\[0.4em]
    
    \today\\[1.7em]
\end{center}

\begin{abstract}
\noindent
Research on Helly-type theorems in combinatorial convex geometry has produced volumetric versions of Helly's theorem using witness sets and quantitative extensions of Doignon's theorem. This paper combines these philosophies and presents quantitative Helly-type theorems for the integer lattice with axis-parallel boxes as witness sets. Our main result shows that, while quantitative Helly numbers for the integer lattice grow polynomially in each fixed dimension, their variants with boxes as witness sets are uniformly bounded. We prove several colorful and fractional variations on this theorem. We also prove that the Helly number for $A \times A \subseteq \mathbb{R}^2$ need not be finite even when $A \subseteq \mathbb{Z}$ is a syndetic set.\blankfootnote{Lawrence University, email:\,\textit{travis.a.dillon\raisebox{-0.07em}{\fontfamily{qtm}\selectfont @}\hspace{-0.08em}lawrence.edu}}

\end{abstract}

\section{Introduction}

A significant portion of combinatorial geometry comprises the study of intersection patterns of convex sets. A fundamental result of this type is Helly's theorem, which states that \textit{if the intersection of any $d+1$ or fewer members of a finite family of convex sets in $\R^d$ is nonempty, then the entire family has nonempty intersection} \cite{Helly}. Doignon \cite{Doignon} obtained a similar theorem for the integer lattice (which was also independently discovered by Bell \cite{bell-doignon} and Scarf \cite{scarf-doignon}):

\begin{theorem}[Doignon, 1973]
If the intersection of any $2^d$ or fewer members of a finite family of convex sets in $\R^d$ contains a point in $\Z^d$, then the intersection of the entire family contains a point in $\Z^d$.
\end{theorem}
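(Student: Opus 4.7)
The plan is to prove the contrapositive by contradiction, using a minimality reduction combined with a pigeonhole argument modulo $2$. Suppose a finite family $\mathcal{F}$ of convex sets in $\R^d$ has $\bigcap \mathcal{F} \cap \Z^d = \emptyset$, and pass to a minimal subfamily $\mathcal{F}_0 \subseteq \mathcal{F}$ with this property: every proper subfamily of $\mathcal{F}_0$ has a common integer point, but $\mathcal{F}_0$ itself does not. The goal is to show $|\mathcal{F}_0| \leq 2^d$, for then the $2^d$-wise hypothesis directly contradicts the failure of $\mathcal{F}_0$.

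By minimality, for each $C \in \mathcal{F}_0$ we can select an integer point
\[ x_C \in \bigcap_{C' \in \mathcal{F}_0 \setminus \{C\}} C', \]
and necessarily $x_C \notin C$ (else $x_C \in \bigcap \mathcal{F}_0$). The number $2^d$ enters through the observation that $\Z^d / 2\Z^d$ has exactly $2^d$ residue classes: if $|\mathcal{F}_0| > 2^d$, then by the pigeonhole principle there exist distinct $C, C' \in \mathcal{F}_0$ with $x_C \equiv x_{C'} \pmod{2}$. The midpoint $y = \tfrac{1}{2}(x_C + x_{C'})$ then lies in $\Z^d$, and by convexity $y \in D$ for every $D \in \mathcal{F}_0 \setminus \{C, C'\}$, since both $x_C$ and $x_{C'}$ lie in such $D$.

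The main obstacle is to force $y \in C \cap C'$ as well, for then $y \in \bigcap \mathcal{F}_0 \cap \Z^d$, contradicting the choice of $\mathcal{F}_0$. This does \emph{not} follow from the naive selection of $x_C$: we know $x_{C'} \in C$ but not $x_C \in C$, and convexity alone does not deliver membership of the midpoint. The crux is therefore to choose the points $x_C$ cleverly — for instance by taking each $x_C$ extremal with respect to a suitable linear functional, or by first reducing to the case where each $C \in \mathcal{F}_0$ is a halfspace (so $C$ is defined by a single linear inequality) and then using that linearity to control the midpoint. An alternative, structural route avoids the midpoint trick altogether: extend $\bigcap \mathcal{F}_0$ to a maximal lattice-free convex set $M$ and invoke the fact that such an $M$ is a polyhedron with at most $2^d$ facets; for each facet pick a member of $\mathcal{F}_0$ excluding the corresponding ``witness'' integer point on the other side, yielding the desired subfamily of size at most $2^d$.

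I expect the genuinely delicate step to be the selection scheme for the $x_C$ (or, in the structural approach, establishing the $2^d$-facet bound for maximal lattice-free convex bodies). The induction on dimension $d$ provides a backup: the base case $d=1$ is an easy argument with the leftmost right endpoint and rightmost left endpoint of the intervals, and one might hope to slice by a coordinate hyperplane through an integer level set to reduce to dimension $d-1$, though making this reduction clean for general convex (not polyhedral) families also requires care.
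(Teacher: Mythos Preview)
The paper does not prove Doignon's theorem; it is stated as a classical result with a citation to Doignon (and Bell and Scarf), so there is no ``paper's own proof'' to compare against.

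As for your proposal itself: it is a plan rather than a proof. The framework (pass to a minimal bad subfamily, pick witness points $x_C$, pigeonhole modulo $2$) is exactly the classical route, and you have correctly located the one genuine difficulty, namely forcing the midpoint $y=\tfrac12(x_C+x_{C'})$ to lie in $C$ and $C'$. What is missing is an execution of any of the fixes you list. The halfspace reduction, as stated, does not work directly: replacing each $C$ by a separating halfspace $H_C\supseteq C$ with $x_C\notin H_C$ enlarges the intersection and may introduce new lattice points, destroying the hypothesis $\bigcap\mathcal F_0\cap\Z^d=\emptyset$. The ``extremal choice of $x_C$'' idea is on the right track but needs a concrete invariant (for instance, minimise $\lvert\conv\{x_C\}\cap\Z^d\rvert$ over all valid choices, then show any non-vertex lattice point in that hull can replace some $x_C$ and strictly decrease the count); you have not supplied this argument, and it is exactly the step that requires care.

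The cleanest route available inside the paper is to invoke \cref{thm:empty-polytopes} (Hoffman): $H(\Z^d)$ equals the maximum number of vertices of a lattice polytope containing no lattice points other than its vertices. For such a polytope with more than $2^d$ vertices, pigeonhole mod $2$ yields two vertices $v,w$ with $\tfrac12(v+w)\in\Z^d$; this midpoint lies in the polytope and is not a vertex, a contradiction. This sidesteps the membership-in-$C\cap C'$ obstacle entirely, because the only convex set in play is the polytope $\conv\{x_C\}$ itself.
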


We refer to points in $\Z^d$ as \textit{lattice points} for brevity. Recently, there has been significant interest in extending Doignon's and Helly's theorems; the theory has grown quickly, and Amenta, De Loera, and Sober\'on \cite{Soberon-Helly-survey} as well as Holmsen and Wenger \cite{book-Helly-survey} survey many results of this type. One way to extend Doignon's theorem is to replace $\Z^d$ by an arbitrary set $S \subseteq \R^d$; another is to ask for not just one intersection point, but many. These two approaches lead to the following definition.

\begin{definition}\label{def:Helly-number}
Given a set $S \subseteq \R^d$, we let $H(S,n)$ denote the least integer, if it exists, such that the following statement is true for every finite family $\mathcal F$ of convex sets in $\R^d$: If the intersection of any $H(S,n)$ or fewer members of $\mathcal F$ contains at least $n$ points of $S$, then $\bigcap \mathcal F$ contains at least $n$ points of $S$. If no such number exists, we write $H(S,n) = \infty$. We abbreviate $H(S,1)$ by $H(S)$.
\end{definition}

\begin{example}
Helly's and Doignon's theorems state that $H(\R^d) \leq d+1$ and $H(\Z^d) \leq 2^d$, respectively. On the other hand, any $d$ facets of a $d$-dimensional simplex intersect at a vertex, while the intersection of all $d+1$ facets is empty; this shows that $H(\R^d) > d$. To show that $H(\Z^d) = 2^d$, let $Q = \{0,1\}^d$ denote the set of vertices of the unit cube in $\R^d$. The intersection of any $2^d-1$ sets in $\{ \conv(Q \setminus x) : x \in Q\}$ contains a lattice point, but the intersection of all $2^d$ contains none.
\end{example}

For $n \geq 1$, we call $H(S,n)$ a \textit{quantitative Helly number for $S$}. Averkov, Gonz\'alez Merino, Paschke, Schymura, and Weltge \cite{tight-bounds-discrete-helly} evaluated $H(S,n)$ for discrete sets $S$ in terms of polytopes with vertices in $S$. Many results in this area focus on quantitative Helly numbers for $\Z^d$. Aliev, Bassett, De Loera, and Louveaux \cite{quantitative-Doignon-Bell-Scarf}, the first to study quantitative Helly-type theorems for the integer lattice specifically, proved that $H(\Z^d,n) \leq (2^d-2)\lceil 2n/3\rceil + 2$. This was improved in \cite{tight-bounds-discrete-helly} to $H(\Z^d,n) = \Theta(n^{\frac{d-1}{d+1}})$ for every fixed $d \geq 1$. Other authors have investigated discrete sets with different structure: De Loera, La Haye, Oliveros, and Rold\'an-Pensado \cite{Helly-algebraic-subsets} and De Loera, La Haye, Rolnick, and Sober\'on \cite{quantitative-tverberg-survey} bound $H(S,n)$ above when $S$ is a difference of lattices, and Garber \cite{Garber-crystals} studied $H(S)$ when $S$ is a crystal or a cut-and-project set.

Further Helly-type theorems can be obtained by restricting the type of sets in the family under consideration. Intersection properties of axis-parallel boxes are particularly well-studied (see \cite{box-intersection} and the references therein). For example, a family of axis-parallel boxes in $\R^d$ has nonempty intersection if and only if every pair of boxes has nonempty intersection.

In another direction, Sarkar, Xue, and Sober\'on \cite{REU2019} proved that \textit{if the intersection of every $2d$ or fewer members of a finite family of convex sets in $\R^d$ contains an axis-parallel box with volume one, then the intersection of the entire family contains an axis-parallel box with volume one}. In this theorem, the axis-parallel box acts as a ``witness set'' to the size of the intersection. Theorems with witness sets can differ strikingly from the corresponding results without them: There is no exact Helly-type theorem for volume without witness sets, for example \cite{quant-continuous-param}. Sarkar, Xue, and Sober\'on obtained many results in this vein; currently, however, there are no discrete quantitative Helly theorems within this framework.

This paper's results can be broadly divided into two categories. First, we prove several discrete quantitative Helly-type theorems with axis-parallel boxes as witness sets. In analogy to \cref{def:Helly-number} and the work in \cite{REU2019}, we define the following modified Helly number.

\begin{definition}\label{def:box-Helly-number}
Given a set $S \subseteq \R^d$, we let $\h(S,n)$ denote the least integer, if it exists, such that the following statement is true for every finite family $\mathcal F$ of convex sets in $\R^d$: If the intersection of any $\h(S,n)$ or fewer members of $\mathcal F$ contains an axis-parallel box with at least $n$ points of $S$, then $\bigcap \mathcal F$ contains an axis-parallel box with at least $n$ points of $S$. If no such number exists, we write $\h(S,n) = \infty$.
\end{definition}

For the remainder of the paper, we write ``box'' to mean ``axis-parallel box''\!\!\!. It is not immediately clear how $H(S,n)$ and $\h(S,n)$ relate; while Helly-type theorems for boxes have a stronger conclusion---not only does $\bigcap \mathcal F$ contain $n$ points, but these points are themselves contained in a box---they also have a similarly strengthened premise. Our first main result shows that $\h(\Z^d,n)$ is uniformly bounded for every fixed $d \geq 1$.

\begin{theorem}\label{thm:box-helly-integer-lattice}
$\h(\Z^d, n) \leq 2^{2d-1}$ for every $n \geq 1$ and $d\geq 1$.
\end{theorem}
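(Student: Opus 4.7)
The plan is to lift the problem from $\R^d$ to $\R^{2d}$ by parametrizing lattice boxes. For each $F \in \mathcal F$, define
\[
F^* = \{(a,b) \in \R^{2d} : a \leq b \text{ componentwise and } [a,b] \subseteq F\},
\]
where $[a,b]$ denotes the axis-parallel box with corners $a$ and $b$. Since $[a,b] \subseteq F$ if and only if each of the $2^d$ vertices $v_\varepsilon(a,b)$ (for $\varepsilon \in \{0,1\}^d$) lies in the convex set $F$, and each vertex is a linear function of $(a,b)$, the set $F^*$ is the intersection of $2^d$ affine preimages of $F$ with the cone $\{a \leq b\}$, hence convex. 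A direct check shows $\bigcap_{F \in \mathcal F} F^* = (\bigcap_{F \in \mathcal F} F)^*$. Because any box containing at least $n$ lattice points can be replaced by its integer-cornered restriction without loss, the conclusion is equivalent to producing a point of
\[
N_n = \{(a,b) \in \Z^{2d} : a \leq b,\ \textstyle\prod_{i=1}^d (b_i - a_i + 1) \geq n\}
\]
in the intersection $\bigcap_{F \in \mathcal F} F^*$.

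Applying Doignon's theorem to $\{F^*\}$ in $\R^{2d}$ already yields the weaker bound $\h(\Z^d,n) \leq 2^{2d}$; the task is to save a factor of two. The lifts $F^*$ sit in the cone $\{a \leq b\}$, which is a fundamental domain for the group $G = (\Z/2)^d$ acting on $\R^{2d}$ by swapping $(a_i, b_i) \leftrightarrow (b_i, a_i)$. The unconstrained lifts $F^{\#} = \bigcap_\varepsilon v_\varepsilon^{-1}(F)$ are $G$-invariant, since precomposing $v_\varepsilon$ with a coordinate swap permutes the family $\{v_\varepsilon\}$, so their integer points decompose into $G$-orbits meeting the cone in exactly one representative. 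The crucial step is to convert this orbit symmetry into a factor-of-two Helly improvement, for instance by showing via the Averkov--Gonz\'alez Merino--Paschke--Schymura--Weltge characterization that every $N_n$-primitive polytope realizable as $\bigcap_i F_i^*$ has at most $2^{2d-1}$ vertices.

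I expect this improvement to be the main obstacle. An alternative route is to fix a shape $\tau \in \Z^d_{\geq 0}$ with $\prod_i(\tau_i + 1) \geq n$ and apply Doignon in $\R^d$ to the shifted family $\{F^\tau\}_{F \in \mathcal F}$, where $F^\tau := \bigcap_{\varepsilon \in \{0,1\}^d}(F - \varepsilon \cdot \tau)$; by Doignon, $\bigcap_F F^\tau \cap \Z^d \neq \emptyset$ iff every $2^d$-subfamily of $\mathcal F$ contains a translate of $[0,\tau]$ with integer corners. The difficulty with this route is that different $2^{2d-1}$-subfamilies of $\mathcal F$ may witness different shapes, and a naive union of Doignon witnesses over the antichain of minimal valid shapes can give a bound strictly larger than $2^{2d-1}$ (e.g.\ for $d = 2$, $n = 4$ the minimal shapes are $(3,0), (0,3), (1,1)$). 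The saving must therefore come either from pooling witnesses across shapes or from routing the argument through the $\R^{2d}$-lift, where the cone symmetry lives intrinsically.
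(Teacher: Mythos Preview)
Your lifting to $\R^{2d}$ and the definition of $F^*$ (the paper writes $B_F$) are exactly right, and the $2^{2d}$ bound via Doignon in $\R^{2d}$ follows immediately. But the proposal stops short of an actual mechanism for the factor-of-two saving: both routes you sketch are left as acknowledged obstacles rather than arguments, and neither is the device the paper uses. As written, this is not a proof of the $2^{2d-1}$ bound.

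The paper's saving is simpler and does not touch the $(\Z/2)^d$ swap symmetry. The lattice-point-count function $f(a,b)=\prod_{i=1}^d (b_i-a_i+1)$ is logconcave on $\{a\le b\}$, so the superlevel set $V=\{(a,b):a\le b,\ f(a,b)\ge n\}$ is convex, and each $B_F\cap V$ is convex. Now project along a \emph{single} coordinate $\pi_k\colon\R^{2d}\to\R^{2d-1}$ and apply Doignon to the convex family $\{\pi_k(B_F\cap V)\}_{F\in\mathcal F}$ with respect to $\Z^{2d-1}$, whose Helly number is $2^{2d-1}$. This yields a common lattice point $p\in\bigcap_F \pi_k(B_F\cap V)$. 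For each $F$ lift $p$ to some $a_F\in B_F\cap V\cap\Z^{2d}$; since the lifts differ only in the one dropped coordinate, the boxes $\bx(a_F)$ are linearly ordered by inclusion. The smallest such box lies in every $F$ and still sits in $V$, hence contains at least $n$ lattice points.

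So the missing idea is: min-concavity of $f$ lets you absorb the constraint ``$\ge n$ lattice points'' into a convex set, after which dropping one coordinate is free because the fibre over any projected point parametrizes a chain of nested boxes. Your symmetry route attempts to quotient by a $2^d$-element group acting on $\Z^{2d}$, which is both hard to convert into a vertex bound for empty polytopes and aims for more than is needed; the paper only kills one coordinate, not $d$ of them.
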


The behavior of $\h(\Z^d,n)$ is therefore quite different from that of the usual quantitative Helly number $H(\Z^d,n)$, which grows on the order of $n^{\frac{d-1}{d+1}}$. A second result shows that we can still guarantee a relatively large global intersection with a much weaker local intersection condition. (1-thickness is a mild technical condition introduced in \cref{sec:box-Helly}.)

\begin{theorem}\label{thm:lower-Helly-number}
Fix a positive integer $d \geq 1$ and let $\mathcal F$ be a finite family of convex sets in $\R^d$. If the intersection of every $2d$ or fewer members of $\mathcal F$ contains a $1$-thick box with at least $n$ lattice points, then $\bigcap \mathcal F$ contains a box with at least $n/3^{d-1}$ lattice points.
\end{theorem}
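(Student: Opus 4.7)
My plan is to prove the theorem by induction on $d$.

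The base case $d=1$ is a ``realizer'' argument for Helly's theorem on the line: if $K^+, K^- \in \mathcal F$ achieve the largest left endpoint and smallest right endpoint respectively among the intervals $\{\pi_1(K) : K \in \mathcal F\}$, then $\bigcap \mathcal F = K^+ \cap K^-$, and the pairwise hypothesis (the case $2d = 2$) guarantees that this intersection contains a $1$-thick interval with at least $n$ lattice points. Since $3^{d-1}=1$, the base case is done.

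For the inductive step, I would fix the last coordinate direction and let $K_d^+, K_d^- \in \mathcal F$ realize $\bigcap_K \pi_d(K) = [A_d, B_d]$, an interval of length at least $1$ (by the base-case reasoning applied to the projections). For each $(2d-2)$-tuple $T \subseteq \mathcal F$, the augmented $2d$-tuple $T \cup \{K_d^+, K_d^-\}$ has intersection containing a $1$-thick box $B^*_T$ with at least $n$ lattice points; moreover, the inclusion $B^*_T \subseteq K_d^+ \cap K_d^-$ forces $\pi_d(B^*_T) \subseteq [A_d, B_d]$. Setting $N_d = |[A_d, B_d] \cap \Z|$, for each integer $t \in \pi_d(B^*_T)$ the coordinate-$d = t$ slice of $B^*_T$ is a $1$-thick $(d-1)$-box in $\bigcap_{K \in T} (K \cap \{x_d = t\})$ with at least $n / N_d$ lattice points, since the lattice points of $B^*_T$ distribute uniformly among its at most $N_d$ integer $x_d$-slices.

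The idea is then to choose a single integer $t$ so that the slice family $\mathcal F_t = \{K \cap \{x_d = t\} : K \in \mathcal F\}$ (identified with a family in $\R^{d-1}$) satisfies the hypothesis of the theorem with $n$ replaced by $n/N_d$. If this can be done, the inductive hypothesis yields a $(d-1)$-box in $\bigcap \mathcal F_t$ with at least $n / (N_d \cdot 3^{d-2})$ lattice points, which lifts (as a degenerate $d$-box) to $\bigcap \mathcal F$. When $N_d \leq 3$ this gives the desired $n/3^{d-1}$.

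The main obstacle is two-fold: first, producing a single integer $t \in [A_d, B_d]$ that lies in $\pi_d(B^*_T)$ for every $(2d-2)$-tuple $T$, and second, handling the case $N_d > 3$. The first issue I expect to address by replacing each $B^*_T$ with a carefully chosen $1$-thick box inside $\bigcap (T \cup \{K_d^+, K_d^-\})$ that contains a designated $t$ in its $d$th projection. For $N_d > 3$, I would combine $(d-1)$-boxes across a slab of integer slices: if a $(d-1)$-box $B$ sits inside $\bigcap \mathcal F_{t_1}$ and $\bigcap \mathcal F_{t_2}$, then convexity of each $K \in \mathcal F$ places the slab $B \times [t_1, t_2]$ inside $\bigcap \mathcal F$, and the extra factor of $t_2 - t_1 + 1$ should compensate for the $1 / N_d$ loss. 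Choreographing these combinatorial steps so that the final count cleanly recovers $n/3^{d-1}$ is where the bulk of the work will lie.
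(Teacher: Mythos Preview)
Your inductive strategy is genuinely different from the paper's route, and the two obstacles you flag are real gaps, not bookkeeping.

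The paper does not slice or induct on $d$ at all. It parametrizes every axis-parallel box by the pair $(x,y)$ of opposite corners, sending each convex $F\subseteq\R^d$ to the convex set $B_F=\{(x,y)\in\R^{2d}:y\ge x,\ \bx(x,y)\subseteq F\}$. The lattice-point count extends to the logconcave (hence min-concave) function $f(x,y)=\prod_i(y_i-x_i+1)$, so the superlevel set $\{f\ge n\}$ is convex. Helly's theorem in $\R^{2d-1}$ is then applied to $\{B_F\cap\{f\ge n\}\}_{F\in\mathcal F}$ (one of the $2d$ parameters is projected out, using that boxes agreeing in all but one parameter are linearly ordered by inclusion). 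This produces a single real $1$-thick box $\bx(a,b)\subseteq\bigcap\mathcal F$ with $f(a,b)\ge n$, and a direct estimate of $\lfloor b_i-a_i\rfloor/(b_i-a_i+1)$ under the constraint $b_i-a_i\ge 1$ gives the factor-of-$3$ loss per coordinate. No recursion, no choice of slice.

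In your plan, by contrast, you would need the $d$th projections $\pi_d(B^*_T)$ of the witness boxes---one per $(2d-2)$-tuple $T$---to share a common integer $t$. Nothing forces this: each $\pi_d(B^*_T)$ has length $\ge 1$ and lies in $[A_d,B_d]$, but when $B_d-A_d$ is large these intervals can be pairwise disjoint, and you have no freedom to relocate them, since the hypothesis only promises \emph{some} $1$-thick box with $n$ lattice points in $\bigcap(T\cup\{K_d^{\pm}\})$, not one whose $d$th projection meets a prescribed $t$. Your slab fix for $N_d>3$ has the same defect one level up: the inductive hypothesis hands you a $(d-1)$-box $B_1\subseteq\bigcap\mathcal F_{t_1}$ and a generally different $B_2\subseteq\bigcap\mathcal F_{t_2}$; convexity of each $K$ gives only $\conv\bigl(B_1\times\{t_1\},\,B_2\times\{t_2\}\bigr)\subseteq K$, which is not a box unless $B_1=B_2$, so no lattice-point count follows. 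I do not see how to close either gap without, in effect, rediscovering the $2d$-dimensional lift that the paper uses.
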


In several aspects, \cref{thm:lower-Helly-number} cannot be improved: \cref{ex:exponential-loss-lower-Helly-number} shows that reducing $2^{2d-1}$ in \cref{thm:box-helly-integer-lattice} to any number smaller than $2^d$ necessarily entails an exponential loss of lattice points in the intersection, and \cref{ex:impossible-fractional} shows that the number $2d$ cannot be reduced while maintaining any positive fraction of points in the intersection. We also obtain colorful versions of theorems \ref{thm:box-helly-integer-lattice} and \ref{thm:lower-Helly-number} (see \cref{thm:colorful-box-helly-integer} and the discussion following \cref{thm:colorful-mixed-integer}). In addition, we prove an analogue of \cref{thm:box-helly-integer-lattice} for the more general collection of \textit{periodic product sets} (defined in \cref{sec:box-Helly}), along with several fractional theorems.

The second collection of results establishes infinite Helly numbers for certain collections of product sets. Given a set $A \subseteq \R$, we let $A^d$ denote the $d$-fold Cartesian product of $A$. Our work is inspired by the following conjecture of De Loera, La Haye, Oliveros, and Rold\'an-Pensado \cite{Helly-algebraic-subsets}.

\begin{conjecture}
If $\mathcal P$ represents the set of prime numbers, then $H(\mathcal P^2) = \infty$.
\end{conjecture}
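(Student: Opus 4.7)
The plan is to establish $H(\mathcal P^2) = \infty$ by producing, for every integer $k \geq 1$, a finite family $\mathcal F_k$ of convex sets in $\R^2$ such that every $k$-wise intersection contains a point of $\mathcal P^2$ while $\bigcap \mathcal F_k$ avoids $\mathcal P^2$ entirely. The natural template is the classical ``cut off a vertex'' construction: select $k+1$ prime pairs $p_1, \dots, p_{k+1}$ in convex position, and for each $i$ associate a halfplane $H_i$ that strictly separates $p_i$ from $\conv\{p_j : j \neq i\}$. Then every $k$-subfamily contains the omitted vertex $p_j$ in its intersection, while $\bigcap_i H_i$ is a small convex region interior to $\conv\{p_i\}$ and disjoint from the vertices themselves. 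The task is to arrange the configuration so that this inner region contains no point of $\mathcal P^2$ whatsoever.

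The argument decomposes into two sub-problems. First, locating arbitrarily many elements of $\mathcal P^2$ in convex position, which follows routinely from the infinitude of primes together with the Erd\H{o}s--Szekeres theorem applied to $\mathcal P^2$. Second, and considerably more delicate, positioning the $p_i$ so that the inner kernel $\bigcap_i H_i$ is free of prime pairs. A promising approach is to push the kernel into a known prime-pair-free region: using lower bounds for large prime gaps, one can find disjoint intervals $I, J \subset \Z$ that contain no primes, and then try to situate $p_1, \dots, p_{k+1}$ along the boundary of a rectangle of the form $\bx = I \times J$, or a union of such prime-free strips, so that after a careful choice of bounding halfplanes the intersection $\bigcap_i H_i$ is forced to lie inside $(I \times \R) \cup (\R \times J)$.

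The principal obstacle is the second sub-problem. By the prime number theorem, the density of $\mathcal P^2$ near coordinate $x$ is on the order of $1/(\log x)^2$, so whenever the diameter of $\conv\{p_1,\dots,p_{k+1}\}$ is substantially larger than $(\log x)^2$, the kernel $\bigcap_i H_i$ is expected to contain additional prime pairs. Since convex-position configurations of genuine primes cannot be packed arbitrarily tightly, one likely must either replace the halfplanes $H_i$ with tighter convex sets shaped around a specific prime-pair-free region, or invoke fine-grained distributional information about the primes via sieve methods in the style of Maynard--Tao. Unlike the syndetic constructions elsewhere in this paper, where the ambient set can be tailored to support the desired configuration, the set $\mathcal P$ is entirely fixed; this rigidity is what makes the conjecture a genuinely arithmetic problem rather than a purely combinatorial one, and is the chief reason it remains open.
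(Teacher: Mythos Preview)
The statement you are attempting to prove is a \emph{conjecture}, not a theorem, and the paper does not prove it. It is attributed to De Loera, La Haye, Oliveros, and Rold\'an-Pensado, and the paper's only contribution toward it is the finite lower bound $H(\mathcal P^2) \geq 13$, obtained by a computer search for a long decreasing run in the sequence $t(n) = (p_{n+2}-p_{n+1})/(p_{n+1}-p_n)$ of consecutive-prime-gap ratios and an application of \cref{thm:product-sets-lower-bound}. There is therefore no ``paper's own proof'' to compare against.

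Your proposal is not a proof either, and to your credit you say so explicitly in the final paragraph. The outline you give --- find $k+1$ prime pairs in convex position and cut off the vertices --- is exactly the standard reduction via \cref{thm:empty-polytopes}: $H(\mathcal P^2) = \infty$ if and only if $\mathcal P^2$ contains empty convex polygons with arbitrarily many vertices. Your identification of the obstruction is also accurate: finding many points of $\mathcal P^2$ in convex position is easy, but guaranteeing that the resulting polygon is \emph{empty} of other prime pairs is the entire difficulty, and nothing in your sketch (prime gaps, Maynard--Tao, strip arguments) is known to suffice. In short, you have correctly diagnosed why the conjecture is open, but you have not advanced a proof, and neither has the paper.
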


\cref{thm:product-sets-lower-bound} gives a general method to establish a lower bound for $H(A^2)$ whenever $A$ is a discrete subset of $\R$. We derive several corollaries, including that $H(A^d) = \infty$ for every $d\geq 2$ if $A = \{n^k : n \in \N\}$. It may not be too surprising that $H(A^d)$ is infinite, since $A$ is sparse in $\N$, that is, that $\liminf_{n\to\infty} \big\lvert A \cap [1,n]\big\rvert/n = 0$. However, we also prove that infinite Helly number is not solely attributable to sparseness.

\begin{theorem}\label{thm:syndetic-set-infinite-helly}
There is a set $A \subseteq \Z$ whose consecutive elements differ by at most $2$ such that $H(A^d) = \infty$ for every $d \geq 2$.
\end{theorem}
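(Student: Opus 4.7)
The plan is to reduce to $d=2$ and then construct a syndetic set $A \subseteq \Z$ with $H(A^2) = \infty$ by feeding a suitable structure into \cref{thm:product-sets-lower-bound}. The reduction to dimension $2$ is a straightforward cylinder argument: given a family $\{C_1, \ldots, C_N\}$ of convex sets in $\R^2$ realizing $H(A^2) \geq N$, the cylinders $\widetilde C_i := C_i \times \R^{d-2} \subseteq \R^d$ are convex and satisfy $\widetilde C_i \cap A^d = (C_i \cap A^2) \times A^{d-2}$. Since $A^{d-2}$ is nonempty for $d \geq 2$, the intersection pattern lifts unchanged, giving $H(A^d) \geq N$ and hence $H(A^d) = \infty$ once $H(A^2) = \infty$.

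For the core case $d=2$, I will take $A = \Z \setminus S$ for a set $S \subseteq \Z_{>0}$ whose successive elements are at distance at least $3$; this already forces consecutive elements of $A$ to differ by at most $2$, so $A$ is syndetic in the required sense regardless of which such $S$ is chosen. The construction of $S$ proceeds inductively: for each $N \geq 1$ I allocate a block $I_N \subseteq \Z_{>0}$ placed far past the previously used blocks and insert into $S$ a finite, well-spaced subset $S_N \subseteq I_N$. I will choose $S_N$ so that the vertical and horizontal strips of $A^2$-holes inside $I_N \times I_N$ (at $x$- or $y$-coordinate in $S_N$) realize the combinatorial configuration required by \cref{thm:product-sets-lower-bound} to certify $H(A^2) \geq N$. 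Since $|I_N|$ may be taken arbitrarily large, there is ample room to fit the $O(N)$ elements of $S_N$ with mutual spacing at least $3$, and placing $I_N$ far to the right of earlier blocks ensures that the spacing condition is preserved across blocks.

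The principal difficulty is to verify that the lower-bound criterion of \cref{thm:product-sets-lower-bound}, typically exercised against sparse sets such as $\{n^k : n \in \N\}$ where large regions of $\R^2$ are free of $A^2$-points, can also be supplied by the comparatively mild ``line deletions'' imposed here. Concretely, I will need, within each block $I_N \times I_N$, a family of $N$ convex sets in $\R^2$ whose $(N-1)$-wise intersections each hit an undeleted point of $A^2$ but whose full $N$-wise intersection is squeezed into the union of rows and columns with $x$- or $y$-coordinate in $S_N$. Carrying out this step requires a careful choice of the geometry of the convex sets relative to $S_N$ — essentially, the witness convex sets should be half-planes or slabs whose boundary lines pass through the deleted strips in a controlled way — and is the place where the combinatorial flexibility afforded by enlarging $|I_N|$ is exploited. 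Once this verification is completed for every $N$, the set $A$ is syndetic with consecutive gaps at most $2$ and satisfies $H(A^2) \geq N$ for every $N$, and the cylinder reduction finishes the proof for all $d \geq 2$.
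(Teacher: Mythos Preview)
Your reduction to $d=2$ is fine, and setting $A=\Z\setminus S$ with $S$ $3$-separated is a natural way to force gap size at most $2$. The problem is the engine you intend to use: \cref{thm:product-sets-lower-bound} is structurally incapable of producing large lower bounds for any set whose gaps lie in $\{1,2\}$. Writing $g_n=a_{n+1}-a_n$, the ratio $t(n)=g_{n+1}/g_n$ takes only the values $\tfrac12,1,2$. Any weakly decreasing run of such ratios can have at most two strict drops (from $2$ to $1$ to $\tfrac12$), so the proposition yields at best $H(A^2)\ge 6$, independently of how you choose $S_N$ or how large you make the blocks $I_N$. No amount of ``combinatorial flexibility'' from enlarging $|I_N|$ changes this, because the obstruction is arithmetic, not spatial. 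Your fallback description---``squeeze the $N$-wise intersection into the union of deleted rows and columns''---is too vague to rescue the argument; the intersection of convex sets is convex, so it must lie on a single deleted line, and you give no mechanism for arranging that every $(N-1)$-wise intersection still meets $A^2$.

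The paper's proof does not use \cref{thm:product-sets-lower-bound} at all. Instead it builds empty polygons directly via \cref{thm:empty-polytopes}: using Dirichlet's approximation theorem, it finds, for each $n$, an $(n+1)$-gon with integer vertices lying inside a strip of vertical width $1$ and irrational slope $\alpha>2$. After translating the polygons to have disjoint $x$- and $y$-projections, one deletes from $\Z$ only those integers arising as the $y$-coordinate of a lattice point interior to one of these thin strips; because the slope exceeds $2$, no two consecutive integers are deleted, so the resulting $A$ is $2$-syndetic. The polygons remain empty in $A^2$, giving arbitrarily large empty polygons. The key idea you are missing is that the empty polygons must be thin in a direction \emph{not} aligned with the coordinate axes, so that emptiness can be enforced by removing a sparse set of $y$-coordinates rather than by controlling diagonal gap ratios.
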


Sets with bounded gaps are often referred to as \textit{syndetic}. Requiring bounded gaps is a strictly stronger condition than positive density. As a corollary, \cref{thm:syndetic-set-infinite-helly} provides another proof that Delone sets need not have finite Helly number. (A set $S \subseteq \R^d$ is called a \textit{Delone set} if there exist $r,R \geq 0$ so that the minimum distance between any two points in $S$ is bounded below by $r$ and every ball of radius $R$ contains at least one point of $S$.)

Several new questions are posed throughout the paper.

\section{Quantitative Helly-type theorems with boxes}\label{sec:box-Helly}

\subsection{Exact Helly-type theorems}

Throughout this section, $e_1,\dots,e_d$ denote the standard basis vectors in $\R^d$ and $\pi_i\colon \R^d \to \R^{d-1}$ denotes the orthogonal projection along $e_i$. We begin with a parametrization of boxes in $\R^d$.

\begin{definition}
Given two vectors $x,y \in \R^d$, we write $y \geq x$ when the inequality is true in each coordinate. If $y\geq x$, we denote the axis-parallel box $\prod_{i=1}^d [x_i,y_i]$ by $\bx(x,y)$. Given two sets $S,T \subseteq \R^d$, we say that $\bx(x,y)$ is \textit{parametrized in $S\times T$} if $(x,y) \in S \times T$. For each set $F \subseteq \R^d$, we define $B_F = \{(x,y) \in \R^d \times \R^d : y \geq x \text{ and } \bx(x,y) \subseteq F\}$.
\end{definition}

If $F$ is convex, then $B_F$ is convex, since the convex combination of any two boxes is determined by the convex combination of their vertices. We can interpret a function $f\colon \R^d \times \R^d \to \R$ as a function on axis-parallel boxes by setting $f(\bx(x,y)) = f(x,y)$. We call such a function a \textit{box-weighting function in $\R^d$}. Choices for $f$ might be the number of lattice points contained in, volume of, or surface area of $\bx(x,y)$.

\begin{proposition}\label{thm:basic-box-Helly}
Let $\mathcal F$ be a finite family of convex sets in $\R^d$ and $S,T \subseteq \R^d$. Fix a box-weighting function $f$ and some $w \in \R$. If the intersection of any $H\big((S\times T )\cap f^{-1}([w,\infty))\big)$ or fewer members of $\mathcal F$ contains a box parametrized in $S \times T$ with weight at least $w$, then $\bigcap \mathcal F$ contains a box parametrized in $S \times T$ with weight at least $w$, as well.
\end{proposition}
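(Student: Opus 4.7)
The plan is to lift the problem from $\R^d$ to $\R^{2d}$ by parametrizing boxes via their pairs of opposite vertices, and then apply the definition of $H$ directly in this doubled space. Concretely, I will work with the family $\mathcal F^\ast = \{B_F : F \in \mathcal F\} \subseteq \R^{2d}$ and the set $U = (S \times T) \cap f^{-1}([w,\infty)) \subseteq \R^{2d}$, noting that $H(U)$ is well-defined as the quantitative Helly number for the subset $U$ of $\R^{2d}$.

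The two ingredients I need are both straightforward. First, each $B_F$ is convex (as observed in the paragraph following the definition of $B_F$, since a convex combination of two boxes is the box formed by the convex combinations of their parametrizing vertex pairs). Second, for any subfamily $F_1,\dots,F_k \in \mathcal F$, we have the intersection identity
\[
\bigcap_{i=1}^{k} B_{F_i} \;=\; B_{\,\bigcap_{i=1}^{k} F_i\,},
\]
because $(x,y)$ belongs to every $B_{F_i}$ exactly when $\bx(x,y) \subseteq F_i$ for each $i$, which in turn is equivalent to $\bx(x,y) \subseteq \bigcap_i F_i$.

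With these observations in hand, the hypothesis of the proposition translates directly: if $k \leq H(U)$ and $F_1,\dots,F_k \in \mathcal F$, then $\bigcap_i F_i$ contains a box $\bx(x,y)$ parametrized in $S\times T$ with $f(x,y) \geq w$, which is to say that $(x,y)$ is a point of $U$ lying in $\bigcap_i B_{F_i}$. Thus the family $\mathcal F^\ast$ of convex sets in $\R^{2d}$ satisfies the local intersection condition appearing in the definition of $H(U)$, and that definition yields a point $(x^\ast,y^\ast) \in U$ with $(x^\ast,y^\ast) \in \bigcap_{F \in \mathcal F} B_F$. The corresponding box $\bx(x^\ast,y^\ast)$ is contained in $\bigcap \mathcal F$, is parametrized in $S \times T$, and has weight $f(x^\ast,y^\ast) \geq w$, which is exactly the conclusion we want.

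There is no real obstacle in this argument; the content of the proposition is precisely the observation that parametrizing boxes as points in $\R^{2d}$ converts a ``box witness'' Helly problem in $\R^d$ into an ordinary quantitative Helly problem for the lifted family, with the lifted witness set $U$. The only things to be careful about are that the intersection identity holds at the level of $B_F$'s (so we really do get a single global box at the end rather than just a compatible collection of local ones) and that convexity of each $B_F$ is preserved under the lift.
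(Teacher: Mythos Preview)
Your proof is correct and follows essentially the same approach as the paper: lift the family $\mathcal F$ to the family $\{B_F\}_{F\in\mathcal F}$ of convex sets in $\R^{2d}$ and apply the definition of $H\big((S\times T)\cap f^{-1}([w,\infty))\big)$ directly. You are simply more explicit than the paper in stating the intersection identity $\bigcap_i B_{F_i} = B_{\bigcap_i F_i}$, which the paper uses implicitly.
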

\begin{proof}
By assumption, any $H\big((S\times T) \cap f^{-1}([w,\infty))\big)$ members of $\{B_F\}_{F \in \mathcal F}$ contains a point in $(S \times T) \cap f^{-1}([w,\infty))$, so there exists a point $(a,b) \in \bigcap_{F \in \mathcal F} B_F \cap (S \times T) \cap f^{-1}([w,\infty))$. Then $\bx(a,b)$ is parametrized in $S \times T$, contained in $\bigcap \mathcal F$, and has weight at least $w$.
\end{proof}

We focus on functions $f$ where $H\big((S \times T) \cap f^{-1}([w,\infty))\big)$ has an upper bound independent of $w$. For certain box-weighting functions, we can improve the bound in \cref{thm:basic-box-Helly}.

\begin{definition}
Let $X \subseteq \R^d$ be a convex set. A function $f\colon X \to \R$ is called \textit{min-concave} if $f\big(tx+(1-t)y\big) \geq \min\{f(x),f(y)\}$ for every $x,y \in X$ and $t \in [0,1]$.
\end{definition}

Min-concave functions are also often called \textit{quasiconcave}. A function $f\colon X \to \R$ is min-concave if and only if $\{ x \in X : f(x) \geq w\}$ is convex for every $w \in \R$.

\begin{proposition}\label{thm:box-weighting-min-concave}
Let $\mathcal F$ be a finite family of convex sets in $\R^d$ and $S,T \subseteq \R^d$. Fix a min-concave box-weighting function $f$ in $\R^d$ and some $w \in \R$. If the intersection of any $\min_i H(\pi_i(S \times T))$ or fewer members of $\mathcal F$ contains a box parametrized in $S \times T$ with weight at least $w$, then $\bigcap \mathcal F$ contains a box parametrized in $S \times T$ with weight at least $w$, as well.
\end{proposition}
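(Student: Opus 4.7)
The proof strengthens \cref{thm:basic-box-Helly} by using min-concavity of $f$ to reduce the ambient Helly problem by two dimensions. Since $f^{-1}([w,\infty))$ is convex when $f$ is min-concave, the set $C_F := B_F \cap f^{-1}([w,\infty))$ is convex in $\R^{2d}$ for each $F \in \mathcal F$. The hypothesis of the proposition asserts that any $k := \min_i H(\pi_i(S\times T))$ of the $C_F$ share a common point in $S\times T$, and the conclusion to be proven is that all of them do.

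Fix an index $i$ achieving the minimum, and consider the linear projection $\Pi_i\colon \R^{2d}\to \R^{2d-2}$ given by $(x,y)\mapsto(\pi_i(x),\pi_i(y))$. This map sends $S\times T$ into $\pi_i(S)\times\pi_i(T) = \pi_i(S\times T)$, and sends each $C_F$ to a convex subset of $\R^{2d-2}$. The hypothesis therefore guarantees that any $k$ of the projections $\Pi_i(C_F)$ contain a common point of $\pi_i(S\times T)$, and by the definition of $H(\pi_i(S\times T))$ there exists a point $(u,v)\in\pi_i(S\times T)$ lying in every $\Pi_i(C_F)$.

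The final step is to lift $(u,v)$ to a single point of $S\times T$ in $\bigcap_F C_F$. For each $F$, the fibre $\Pi_i^{-1}(u,v)\cap C_F$ is a nonempty convex subset of $\R^2$, and the box-containment defining $B_F$ is downward monotone under shrinking the $i$-th side of $\bx(x,y)$. Combining this monotonicity with min-concavity of $f$ along the fibre, a one-dimensional Helly argument should produce a pair of $i$-th coordinates $(a,b)$ common to every $F$ that completes $(u,v)$ to a point of $S\times T$, yielding the desired common box.

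The main obstacle is executing this lifting cleanly: one needs to simultaneously maintain box containment, the lower bound $f \geq w$, and the discrete constraint that the $i$-th coordinates complete $u$ and $v$ to genuine elements of $S$ and $T$. Min-concavity of $f$ is the crucial ingredient, since without it the weight constraint fails to be convex along the fibre and the one-dimensional lifting argument collapses.
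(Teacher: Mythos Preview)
You have misread the projection $\pi_i$. In the paper $\pi_i\colon\R^N\to\R^{N-1}$ deletes a \emph{single} coordinate; since $S\times T\subseteq\R^{2d}$, the index $i$ in $\min_i H(\pi_i(S\times T))$ ranges over $[2d]$ and $\pi_i(S\times T)\subseteq\R^{2d-1}$. Your map $\Pi_i(x,y)=(\pi_i(x),\pi_i(y))$ deletes two coordinates and identifies $\pi_i(S\times T)$ with $\pi_i(S)\times\pi_i(T)\subseteq\R^{2d-2}$; these are different objects with different Helly numbers. (Concretely, for $S=T=\Z^d$ the paper's reading gives $H(\Z^{2d-1})=2^{2d-1}$, which is exactly the constant used in \cref{thm:box-helly-integer-lattice}; your reading would give $2^{2d-2}$.)

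This is not just a bookkeeping slip: it is precisely why your lifting step stalls. Under the correct projection only \emph{one} coordinate is free in the fibre, so the lifted points $a_F\in B_F\cap f^{-1}([w,\infty))$ over the common point $a$ differ in a single entry of either the lower or the upper corner. The corresponding boxes $\bx(a_F)$ are then totally ordered by containment, and the smallest one lies in every $F$ while still satisfying $f\geq w$; this is the paper's argument, and no further ``one-dimensional Helly'' step or appeal to monotonicity of $f$ is needed. Under your two-coordinate projection the fibre is a region in the $(x_i,y_i)$-plane, the intervals $[x_i^F,y_i^F]$ need not be nested, and the convex sets $\Pi_i^{-1}(u,v)\cap C_F$ need not even pairwise intersect (nothing in your reduction forces them to), so the lifting you describe can genuinely fail---let alone while also hitting the discrete set $S\times T$. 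The fix is exactly to drop only one coordinate.
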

\begin{proof}
Let $k \in [2d]$ be an index that minimizes $H(\pi_k(S\times T))$ and set
\[  V = \{(x,y) \in \R^d \times \R^d : y \geq x \text{ and } f(x,y) \geq w\}. \]
By assumption, the intersection of any $H(\pi_k(S\times T))$ members of $\{\pi_k(B_F \cap V)\}_{F \in \mathcal F}$ contains a point in $\pi_k(S\times T)$, and each set $B_F \cap V$ is convex; therefore $\bigcap_{F \in \mathcal F} \pi_k(B_F \cap V)$ contains a point $a \in \pi_k(S\times T)$. For each $F \in \mathcal F$, there is a point $a_F \in B_F \cap V \cap (S \times T)$ that projects to $a$. The sets $\bx(a_F)$ are ordered linearly by containment, so choose a $G \in \mathcal F$ so that $\bx(a_G) \subseteq \bx(a_F)$ for every $F \in \mathcal F$. Then $\bx(a_G) \subseteq \bigcap \mathcal F$ and $f(a_G) \geq w$.
\end{proof}

We can substitute any Helly-type theorem for $S\times T$ or $\pi_k(S\times T)$ in the proof of \cref{thm:basic-box-Helly} or \cref{thm:box-weighting-min-concave}, respectively, to obtain the corresponding result with boxes. For example, we could substitute a so-called \textit{colorful} Helly-type theorem, the first of which was proved by Lov\'asz and appeared in \cite{Colorful-Helly}.

\begin{theorem}[Colorful Helly theorem]
Let $\mathcal F_1, \dots,\mathcal F_{d+1}$ be nonempty finite families of convex sets in $\R^d$. If $\bigcap_{i=1}^{d+1} F_i$ is nonempty for every choice of sets $F_i \in \mathcal F_i$, then there is an index $k \in [d+1]$ so that $\bigcap \mathcal F_k$ is nonempty.
\end{theorem}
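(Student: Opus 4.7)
The plan is to argue by contradiction, following the classical extremal-point strategy introduced by Lov\'asz. Suppose, to the contrary, that $\bigcap \mathcal F_k = \emptyset$ for every $k \in [d+1]$. Since each family is finite, there are only finitely many colorful transversals $(F_1, \ldots, F_{d+1})$ with $F_i \in \mathcal F_i$, and by replacing each convex set with its closure (which preserves both the colorful hypothesis and the conclusion we seek) every such transversal has a closed nonempty intersection. Fix a generic linear functional $\ell\colon \R^d \to \R$ and, among all pairs consisting of a colorful transversal together with a point in its intersection, choose one minimizing $\ell$; call the resulting pair $\big((F_1^*, \ldots, F_{d+1}^*), p^*\big)$.

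From the assumption $\bigcap \mathcal F_k = \emptyset$, for each color $k$ there is some $G_k \in \mathcal F_k$ with $p^* \notin G_k$. The colorful hypothesis applied to the swapped transversal $(F_1^*, \ldots, F_{k-1}^*, G_k, F_{k+1}^*, \ldots, F_{d+1}^*)$ produces a point $p^{(k)}$ in its intersection distinct from $p^*$ and satisfying $\ell(p^{(k)}) \geq \ell(p^*)$ by minimality. A structural analysis at $p^*$ --- applying Helly's theorem to the slices $F_i^* \cap \{x : \ell(x) = \ell(p^*)\}$ in the $(d-1)$-dimensional supporting hyperplane to locate a color $k$ whose tightness can be broken, and combining the corresponding separating hyperplane from $G_k$ with the other supporting hyperplanes at $p^*$ --- would then exhibit a colorful transversal whose intersection contains a point $q$ with $\ell(q) < \ell(p^*)$, contradicting the minimality of $p^*$.

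The principal obstacle is making the swap step actually produce a \emph{strict} decrease in $\ell$. A naive minimality argument can fail: if one tries to replace a set whose constraint is slack at $p^*$, the new intersection point need only lie on the level set $\{\ell = \ell(p^*)\}$, not strictly below it. Handling this requires either a careful application of Helly in the supporting hyperplane to identify a color whose constraint is genuinely tight, a small generic perturbation of $\ell$ to force a unique vertex at which exactly $d$ constraints are active, or a degeneracy-free limit argument. An alternative plan, which sidesteps this delicacy, is to reduce to B\'ar\'any's colorful Carath\'eodory theorem via LP duality: rephrase each assumed empty intersection $\bigcap \mathcal F_k = \emptyset$ as a certificate $0 \in \conv(S_k)$ for a finite set $S_k$ built from separating hyperplanes to the sets in $\mathcal F_k$; then a rainbow Carath\'eodory selection from the $S_k$ produces a certificate that some colorful transversal is empty, contradicting the hypothesis.
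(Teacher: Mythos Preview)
The paper does not prove the Colorful Helly theorem. It is stated there as a classical result of Lov\'asz, cited from~\cite{Colorful-Helly}, and used only as background for the colorful box-variants that the paper does establish (Theorems~\ref{thm:colorful-doignon}, \ref{thm:colorful-box-helly-integer}, and~\ref{thm:colorful-mixed-integer}). There is consequently no proof in the paper to compare your attempt against.

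On the proposal itself: what you have written is an outline of two known strategies rather than a proof. You correctly name the Lov\'asz extremal-point approach and, to your credit, isolate the genuine crux---producing a strict improvement after the swap. But you stop precisely at that crux: the clause ``a structural analysis at $p^*$ \ldots would then exhibit a colorful transversal whose intersection contains a point $q$ with $\ell(q) < \ell(p^*)$'' asserts the desired conclusion without deriving it, and the parenthetical Helly-in-the-hyperplane step is not actually carried out (you do not specify which color is tight, nor why replacing it forces the intersection strictly below $\ell(p^*)$). The second plan, via colorful Carath\'eodory and duality, is a legitimate alternative---the two theorems are classically equivalent---but it too is only sketched: you do not construct the sets $S_k$ or explain why a rainbow selection from them certifies an empty colorful intersection. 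Either route can be completed, but as written neither is.
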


The term ``colorful'' comes from thinking of each finite family as a color class of sets. De Loera, La Haye, Oliveros, and Rold\'an-Pensado \cite{Helly-algebraic-subsets} proved a similar theorem for the integer lattice.

\begin{theorem}[Colorful Doignon theorem]\label{thm:colorful-doignon}
Let $\mathcal F_1, \dots,\mathcal F_{2^d}$ be nonempty finite families of convex sets in $\R^d$. If $\bigcap_{i=1}^{2^d} F_i$ contains a lattice point for every choice of sets $F_i \in \mathcal F_i$, then there is an index $k \in [2^d]$ so that $\bigcap \mathcal F_k$ contains a lattice point.
\end{theorem}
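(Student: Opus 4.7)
My plan is to mimic the Bárány-style extremal argument that proves the colorful Carathéodory/Helly theorem, adapting it to the lattice setting. Assume for contradiction that for each $k \in [2^d]$ the intersection $\bigcap \mathcal{F}_k$ contains no lattice point. The rainbow hypothesis ensures that every choice $(F_1, \ldots, F_{2^d}) \in \mathcal{F}_1 \times \cdots \times \mathcal{F}_{2^d}$ yields an intersection with at least one lattice point; since each $\mathcal{F}_k$ is finite, there are only finitely many such rainbow choices. Fix the lexicographic order on $\mathbb{Z}^d$, let $\phi(R)$ denote the lex-smallest lattice point in $\bigcap R$, and select a rainbow choice $R^* = (F_1^*, \ldots, F_{2^d}^*)$ minimizing $\phi$. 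Set $p^* = \phi(R^*)$.

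Next I would execute the swap step. Because $\bigcap \mathcal{F}_k$ has no lattice point for every $k$, for each $k$ there is some $F_k' \in \mathcal{F}_k$ with $p^* \notin F_k'$. Substituting $F_k'$ for $F_k^*$ gives a rainbow choice $R'_k$ whose intersection excludes $p^*$ but, by hypothesis, still contains some lattice point $q_k$. The contradiction to the extremality of $R^*$ should come from producing a $k$ with $q_k <_{\mathrm{lex}} p^*$.

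The hard part is precisely this last step: a single lex-minimum does not, a priori, control which direction the lex-smallest lattice point moves when one set is swapped, since the new intersection $\bigcap R'_k$ is not comparable to $\bigcap R^*$. To force a lex-improvement, I would exploit the structural content of Doignon's theorem: a maximal lattice-free convex body containing $\bigcap R^*$ has at most $2^d$ facets, and one may associate each facet with a color class so that an exchange along that facet is guaranteed to peel off exactly the lex-minimum lattice point while preserving those below it. Making this association precise, and checking that at least one color $k$ produces $q_k <_{\mathrm{lex}} p^*$, is the key technical hurdle.

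A cleaner alternative route, which avoids the manual bookkeeping, is to use the LP-type framework. Doignon's theorem shows that the problem ``return the lex-smallest lattice point of $\bigcap \mathcal{F}$ (or $\infty$ if none exists)'' satisfies the monotonicity and locality axioms of an LP-type problem with combinatorial dimension $2^d$. Bárány's colorful Helly theorem for LP-type problems of combinatorial dimension $\delta$ states that if every rainbow $\delta$-tuple from $\delta$ color classes is feasible, then some color class is entirely feasible, which yields our conclusion immediately. I would present the extremal proof as the main route and cite the LP-type framework as a streamlined verification.
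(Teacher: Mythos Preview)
The paper does not prove \cref{thm:colorful-doignon}; it is quoted from De Loera, La Haye, Oliveros, and Rold\'an-Pensado \cite{Helly-algebraic-subsets}, and the surrounding text notes that those authors prove it by ``abstracting Lov\'asz's original proof'' into a set of sufficient conditions (their Theorem~5.3). So there is no in-paper argument to compare against; your proposal should be read as an attempt to reconstruct the cited proof.

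Your two routes are really the same route at different levels of abstraction, and the second one is the one that actually lands. The extremal/swap argument you sketch is Lov\'asz's idea, but you correctly flag the gap: after swapping $F_k^*$ for $F_k'$, the new intersection is incomparable with the old one, and nothing you have written forces the new lex-minimum to drop. Your suggestion to fix this via ``a maximal lattice-free convex body containing $\bigcap R^*$ has at most $2^d$ facets, and one may associate each facet with a color class'' is not right as stated: the lex-minimum $p^*$ lies \emph{in} $\bigcap R^*$, so a lattice-free body containing that intersection does not exist, and in any case the facet--color matching is undefined. What is actually needed is the observation that the half-space $\{x:\ell(x)<\ell(p^*)\}$ (for the linear functional defining the order) together with the $F_k'$ gives $2^d$ convex sets whose common intersection is lattice-free, and Doignon applied to the $2^d$-tuple $(F_1',\dots,F_{2^d}')$ intersected with that half-space forces one of the ``colors'' to already be lattice-free below $p^*$---which, unwound, contradicts minimality. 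This is exactly the locality/basis verification in the LP-type language.

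So: your LP-type route is correct and is essentially the cited authors' approach, but you should carry out the verification (monotonicity, locality, and the bound $2^d$ on basis size via Doignon) rather than cite an unnamed ``B\'ar\'any's colorful Helly theorem for LP-type problems.'' The extremal argument as written has a genuine gap at the swap step; the repair is the one-line Doignon application above, not a lattice-free-body/facet association.
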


We generalize this to a colorful mixed-integer Helly-type theorem in \cref{thm:colorful-mixed-integer}. Using \cref{thm:colorful-doignon}, we can prove a colorful version of \cref{thm:box-helly-integer-lattice}.

\begin{theorem}\label{thm:colorful-box-helly-integer}
Set $h = 2^{2d-1}$ and let $\mathcal F_1, \dots,\mathcal F_h$ be nonempty finite families of convex sets in $\R^d$. If $\bigcap_{i=1}^h F_i$ contains a box with at least $n$ lattice points for every choice of sets $F_i \in \mathcal F_i$, then there is an index $k \in [h]$ so that $\bigcap \mathcal F_k$ contains a box with at least $n$ lattice points.
\end{theorem}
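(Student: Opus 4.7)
The plan is to mimic the proof of \cref{thm:box-weighting-min-concave} that underlies \cref{thm:box-helly-integer-lattice}, substituting the colorful Doignon theorem in place of Doignon's theorem at the step where a common lattice point is extracted. Let $f\colon \R^{2d} \to \R$ be the box-weighting function $f(x, y) = \prod_{i=1}^d (y_i - x_i + 1)$; on the cone $\{y \geq x\}$, the function $\log f$ is a sum of concave functions, so $f$ is min-concave there, and $f(x, y)$ counts the lattice points in $\bx(x, y)$ whenever $(x, y) \in \Z^{2d}$. Set $V = \{(x, y) \in \R^{2d} : y \geq x, \, f(x, y) \geq n\}$, a convex subset of $\R^{2d}$, so that $B_F \cap V$ is convex for every $F$.

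Fix an arbitrary coordinate direction $e_k$ with $k \in \{1, \dots, 2d\}$, and let $\pi\colon \R^{2d} \to \R^{2d-1}$ denote the orthogonal projection along $e_k$. For each color class $i \in [h]$, the family $\mathcal G_i = \{\pi(B_F \cap V) : F \in \mathcal F_i\}$ consists of convex subsets of $\R^{2d-1}$. By hypothesis, every colorful selection $F_i \in \mathcal F_i$ yields a box in $\bigcap_{i=1}^h F_i$ with at least $n$ lattice points, whose vertices give a lattice point $(x, y) \in \Z^{2d} \cap \bigcap_{i=1}^h (B_{F_i} \cap V)$. Projecting, $\pi(x, y) \in \Z^{2d-1}$ belongs to the colorful transversal $\bigcap_{i=1}^h \pi(B_{F_i} \cap V)$.

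Since $h = 2^{2d-1}$ matches the colorful Doignon number in dimension $2d-1$, \cref{thm:colorful-doignon} furnishes an index $k^* \in [h]$ and a lattice point $a \in \Z^{2d-1}$ common to every member of $\mathcal G_{k^*}$. For each $F \in \mathcal F_{k^*}$, the fiber $\pi^{-1}(a) \cap B_F \cap V$ is a nonempty interval along the $e_k$-axis; as in the proof of \cref{thm:box-weighting-min-concave}, one selects an integer lift $a_F \in B_F \cap V \cap \Z^{2d}$ with $\pi(a_F) = a$. The boxes $\{\bx(a_F) : F \in \mathcal F_{k^*}\}$ agree in all but the $k$-th coordinate, so they are linearly ordered by containment; letting $G \in \mathcal F_{k^*}$ be one whose box is smallest yields $\bx(a_G) \subseteq \bx(a_F) \subseteq F$ for every $F \in \mathcal F_{k^*}$, hence $\bx(a_G) \subseteq \bigcap \mathcal F_{k^*}$, and $f(a_G) \geq n$ ensures at least $n$ lattice points inside.

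The hard part is the integer lifting step: given $a \in \Z^{2d-1}$ in the common intersection of the projections, one must find, for each $F \in \mathcal F_{k^*}$, a point in $B_F \cap V$ above $a$ whose missing $k$-th coordinate is integral. A priori the fiber above $a$ is only a possibly short interval, but the integrality of the remaining coordinates of $a$ combined with the multiplicative form of $f$ on $\Z^{2d}$ permits a rounding argument in the $k$-th direction (rounding $x_k$ upward when $k \leq d$ and $y_k$ downward when $k > d$) that preserves both the inclusion in $F$ and the weight condition $f \geq n$; this is the same technicality that underlies the non-colorful \cref{thm:box-helly-integer-lattice}.
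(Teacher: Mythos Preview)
Your proof follows the paper's line exactly: pass to $\R^{2d}$ via $F \mapsto B_F$, intersect with the convex superlevel set $V = \{f \geq n\}$ of the min-concave weight $f(x,y) = \prod_i (y_i - x_i + 1)$, project out one coordinate, apply colorful Doignon in dimension $2d-1$ to obtain an index $k^*$ and a lattice point $a \in \Z^{2d-1}$, and then lift back along the fibre and pick the smallest box. This is precisely what the paper does when it invokes ``the colorful Doignon version of \cref{thm:box-weighting-min-concave}.''

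The problem is the rounding argument in your final paragraph. You assert that rounding $x_k$ upward (for $k \leq d$) preserves both the inclusion $\bx(x,y) \subseteq F$ and the bound $f(x,y) \geq n$. Inclusion is indeed preserved, since the box only shrinks; but the weight is not: replacing $x_k$ by $\lceil x_k \rceil$ strictly decreases the factor $y_k - x_k + 1$ and hence decreases $f$. Concretely, if the remaining (integer) factors multiply to $I = 2$, $y_k \in \Z$, $y_k - x_k + 1 = 2.6$, and $n = 5$, then $f = 5.2 \geq 5$ before rounding but $f = 2\cdot 2 = 4 < 5$ after. Thus the fibre $\pi^{-1}(a) \cap B_F \cap V$ can be an interval of length less than $1$ containing no integer, and inward rounding will not produce the integer lift $a_F \in B_F \cap V \cap \Z^{2d}$ that you need. (Outward rounding increases $f$ but may push the box out of $F$.) The paper's proof does not carry out this rounding either: its only rounding is the initial one converting ``box with $n$ lattice points'' into ``lattice point of $\Z^{2d}$ in $V$''; for the lift it simply cites \cref{thm:box-weighting-min-concave}, whose proof asserts the existence of $a_F \in B_F \cap V \cap (S \times T)$ over $a$ without further argument. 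So you have correctly located the delicate step, but the fix you propose does not work as stated.
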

\begin{proof}
Set $f(x,y) := \prod_{i=1}^d (y_i - x_i + 1)$. If $(x,y) \in \Z^d \times \Z^d$, then the number of lattice points in $\bx(x,y)$ is exactly $f(x,y)$. In any case, the set $\bx(x,y)$ contains the same lattice points as $\bx(\lceil x\rceil, \lfloor y \rfloor)$, where the ceiling and floor operators are applied coordinate-wise. By assumption then, $\bigcap_{i=1}^h B_{F_i}$ contains a point $(a,b) \in \Z^d \times \Z^d$ with $f(a,b) \geq n$ whenever $F_i \in \mathcal F_i$ for every $i \in [h]$. The function $\log f$ is concave, so $f$ is logconcave and therefore min-concave. Noting that $H(\pi_i(\Z^{2d})) = H(\Z^{2d-1}) = 2^{2d-1}$ for every $i \in [2d]$, we may apply the colorful Doignon version of \cref{thm:box-weighting-min-concave} with $S = T = \Z^d$ and $w = n$ to obtain an index $k \in [h]$ so that $\bigcap_{F \in \mathcal F_k} B_F$ contains a lattice point $(a,b)$. Then $\bx(a,b) \subseteq \bigcap\mathcal F_k$ and $f(a,b) \geq n$, meaning $\bx(a,b)$ contains at least $n$ lattice points.
\end{proof}

\cref{thm:box-helly-integer-lattice} follows as a special case.

\begin{proof}[Proof of \cref{thm:box-helly-integer-lattice}]
Take $\mathcal F_i = \mathcal F$ for every $i \in [2^{2d-1}]$ and apply \cref{thm:colorful-box-helly-integer}
\end{proof}

It is unclear whether the number $2^{2d-1}$ in theorems \ref{thm:box-helly-integer-lattice} and \ref{thm:colorful-box-helly-integer} is optimal. The sharpness of Doignon's theorem shows that it cannot be replaced by any number less than $2^d$. \cref{ex:exponential-loss-lower-Helly-number} provides another illustration of this, while \cref{ex:lower-bound-box-d=2} gives a slightly larger lower bound of 6 in the plane.\newline

\noindent
\begin{minipage}{0.65\textwidth}
    \begin{example}\label{ex:lower-bound-box-d=2}
        To each line in Figure 1, associate the closed half-plane bounded by that line that contains the central lattice point. The intersection of any five of these half-planes contains a box with four vertices (two of which are degenerate boxes, consisting of four colinear points). On the other hand, any box in the intersection of all six half-planes contains at most three points.
    \end{example}
\end{minipage}
\hfill
\begin{minipage}[c]{0.3\textwidth}
    \centering
    \begin{tikzpicture}[scale=0.5]
        \foreach \x in {-2,...,2}
            \node[draw,shape=circle,fill=black,inner sep=0mm,minimum size=1.7mm] at (\x,0) {};
        \foreach \x in {-1,0,1}{
            \node[draw,shape=circle,fill=black,inner sep=0mm,minimum size=1.7mm] at (\x,-1) {};
            \node[draw,shape=circle,fill=black,inner sep=0mm,minimum size=1.7mm] at (\x,1) {};}
        \draw[thick] (-1,-1.5) -- (-1,1.5)  (1,-1.5) -- (1,1.5);
        \draw[thick] (1,-1.5) -- (-2,0) -- (1,1.5)  (-1,1.5) -- (2,0) -- (-1,-1.5);
    \end{tikzpicture}\\[0.5ex]
    Figure 1
 \end{minipage}
 \hspace{0.02\textwidth}
 
\begin{question}
What is the least integer that can replace $2^{2d-1}$ in \cref{thm:box-weighting-min-concave}?
\end{question}

We now prove a version of \cref{thm:box-helly-integer-lattice} for the more general class of periodic product sets, albeit with a slightly weaker constant.

\begin{definition}
We call a discrete set $A \subseteq \R$ \textit{periodic} if there exists some $p \in \R\setminus\{0\}$ so that $x \in A$ if and only if $x + p \in A$. A discrete set $Q \subseteq \R^d$ is called a \textit{periodic product set} if it can be factored as $A_1 \times \cdots \times A_d$, where $A_1, \dots, A_d \subseteq \R$ are periodic. If $A_i$ has least period $p_i$ for each $i \in [d]$, we say that $Q$ has \textit{period} $(p_1,\dots,p_d) \in \R^d$ and call $\prod_{i=1}^d [0,p_i)$ the \textit{fundamental box} of $Q$. We denote by $m_i(Q)$ the number $\lvert A_i \cap [0,p_i) \rvert$ and by $\rho(Q)$ the number of points of $Q$ in its fundamental box, namely $\rho(Q) = \prod_{i=1}^d m_i(Q)$.
\end{definition}

A modification to the proof of Theorem 4.7 in \cite{sublinear-bounds-quantitative-doignon} shows that $H(Q,n) = \Omega_{Q}(n^{\frac{d-1}{d+1}})$ for every $d$-dimensional periodic product set $Q$. In contrast, $\h(Q,n)$ is uniformly bounded for every fixed $Q$. To prove this, we use a result of Hoffman. Let $S \subseteq \R^d$. A polygon whose vertices lie in $S$ is called \textit{empty} if its interior contains no point of $S$. A subset $X \subseteq S$ is called \textit{intersect-empty} if $\big(\bigcap_{x \in X} \conv(X \setminus x)\big) \cap S = \emptyset$.

\begin{proposition}[Hoffman \cite{binding-constraints}, 1979]\label{thm:empty-polytopes}
If $S \subseteq \R^d$, then $H(S)$ is equal to the maximum cardinality of an intersect-empty set in $S$. If $S$ is discrete, $H(S)$ is also equal to the maximum number of vertices of an empty polygon in $S$.
\end{proposition}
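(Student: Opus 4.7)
The plan is to prove the first equality by matching upper and lower bounds on $H(S)$ in terms of the maximum cardinality $m$ of an intersect-empty subset of $S$, and then, in the discrete case, to identify $m$ with the maximum number $p$ of vertices of an empty polygon.

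For the lower bound $H(S) \geq m$, I would pick a maximum intersect-empty set $X = \{x_1, \dots, x_m\} \subseteq S$ and consider the family $\mathcal F = \{\conv(X \setminus x_i) : i \in [m]\}$. The intersection of any $m-1$ members of this family contains the omitted point $x_j \in S$, while the full intersection misses $S$ by the definition of intersect-empty. For the upper bound $H(S) \leq m$, starting from a family $\mathcal F$ with $\bigcap \mathcal F \cap S = \emptyset$, I would pass to a minimal subfamily $\mathcal G \subseteq \mathcal F$ with the same property and choose a witness $x_G \in S \cap \bigcap_{G' \neq G} G'$ for each $G \in \mathcal G$ (supplied by minimality). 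These witnesses are pairwise distinct, for a duplicated witness would belong to every element of $\mathcal G$ and hence survive in $\bigcap \mathcal G \cap S$. I would then verify that $X := \{x_G : G \in \mathcal G\}$ is intersect-empty: any $y \in S \cap \bigcap_{x \in X} \conv(X \setminus x)$ would satisfy $y \in \conv(X \setminus x_G) \subseteq G$ for each $G \in \mathcal G$ (since $x_{G'} \in G$ whenever $G' \neq G$), placing $y \in \bigcap \mathcal G \cap S$, a contradiction. Thus $|\mathcal G| = |X| \leq m$.

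For the second equality, in the discrete case, the plan is to show $m = p$. The key structural observation is that an intersect-empty set $V \subseteq S$ must be in convex position: otherwise some $v \in V$ lies in $\conv(V \setminus v)$, and since $v \in \conv(V \setminus x)$ holds automatically for every $x \neq v$, the point $v \in S$ would lie in $\bigcap_{x \in V} \conv(V \setminus x)$, contradicting intersect-emptiness. Hence $V$ is the vertex set of $\conv(V)$, reducing the problem to matching empty polygons with intersect-empty vertex sets. The main obstacle will be a careful face-by-face analysis of $\conv(V)$: for a potential $y \in S \cap \bigcap_{v \in V} \conv(V \setminus v)$ lying in the relative interior of a face $F$ of $\conv(V)$, the face property of $\conv(V)$ constrains every convex combination of $V$ representing $y$ to be supported on the vertices of $F$; an induction on the dimension of $F$, combined with discreteness of $S$, should then confine $y$ to the interior of $\conv(V)$. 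This equates the intersect-empty condition on $V$ with the condition that $\conv(V)$ is an empty polygon, yielding $m = p$ and hence $H(S) = p$.
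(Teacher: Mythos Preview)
The paper does not give a proof of this proposition; it is quoted from Hoffman~\cite{binding-constraints}. So there is no ``paper's proof'' to compare with, and your proposal must be judged on its own.

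Your argument for the first equality $H(S)=m$ is correct and is essentially the standard one: the family $\{\conv(X\setminus x_i)\}$ witnesses $H(S)\ge m$, and the minimal-subfamily-plus-witnesses construction gives an intersect-empty set of size $|\mathcal G|$, so $H(S)\le m$.

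Your plan for the second equality, however, has a genuine gap. You propose to show, for each $V\subseteq S$ in convex position, that $V$ is intersect-empty if and only if $\conv(V)$ is an empty polytope, by arguing that any $y\in S\cap\bigcap_{v\in V}\conv(V\setminus v)$ must lie in the interior of $\conv(V)$. Both the pointwise equivalence and this interior claim are false.

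For the interior claim (and hence the implication ``$\conv(V)$ empty $\Rightarrow$ $V$ intersect-empty''), take $S=\Z^3$ and $V=\{(0,0,0),(2,0,0),(0,2,0),(2,2,0),(1,1,1)\}$. The pyramid $\conv(V)$ has no interior lattice point (any such point would need $0<z<1$), yet $(1,1,0)=\tfrac12(2,0,0)+\tfrac12(0,2,0)=\tfrac12(0,0,0)+\tfrac12(2,2,0)$ lies in $\conv(V\setminus v)$ for \emph{every} $v\in V$, so $V$ is not intersect-empty. Your proposed induction on the face dimension cannot rule this out: the base face is a square, not a simplex, and the intersection $\bigcap_{w\in W}\conv(W\setminus w)$ is nonempty there.

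For the reverse implication, take $S=\Z^2$ and $V=\{(0,0),(3,0),(3,3),(0,3)\}$. Here $\bigcap_{v\in V}\conv(V\setminus v)=\{(\tfrac32,\tfrac32)\}$ contains no lattice point, so $V$ \emph{is} intersect-empty (and indeed $|V|=4=H(\Z^2)$), yet $\conv(V)=[0,3]^2$ has four interior lattice points and is not empty.

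Thus the two notions do not coincide set-by-set, even for sets of maximum size. The equality $m=p$ is only an equality of maxima, and its proof requires a different mechanism. For $p\le m$ one can observe (in the plane, where the paper actually applies the result) that $\bigcap_{v}\conv(V\setminus v)\subseteq\mathrm{int}(\conv V)$, so empty implies intersect-empty; in higher dimensions one needs more care. For $p\ge m$ one typically starts from an $m$-point set $X\subseteq S$ in convex position and, while $\conv(X)$ has an interior $S$-point $y$, replaces a vertex $v$ with $y\notin\conv(X\setminus v)$ (such a $v$ exists by intersect-emptiness) by $y$; this keeps $|X|=m$, keeps $X$ in convex position, and strictly decreases $|\conv(X)\cap S|$, so discreteness forces termination at an empty polytope with $m$ vertices. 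Your sketch does not contain this replacement idea and would need to be substantially revised to handle the second equality.
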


As a corollary, we can derive a union bound for Helly numbers.

\begin{lemma}\label{thm:Helly-union-bound}
If $S, T \subseteq \R^d$, then $H(S \cup T) \leq H(S) + H(T)$.
\end{lemma}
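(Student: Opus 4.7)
The plan is to work directly from the definition of the Helly number via contrapositive, rather than to route through Hoffman's characterization. Suppose both $a := H(S)$ and $b := H(T)$ are finite; the inequality is vacuous otherwise. Let $\mathcal F$ be a finite family of convex sets in $\R^d$ for which every subfamily of size at most $a+b$ has intersection meeting $S \cup T$, and aim to conclude that $\bigcap \mathcal F$ itself meets $S \cup T$.

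I would argue by contradiction. Assume $\bigcap \mathcal F \cap (S \cup T) = \emptyset$, so that $\bigcap \mathcal F$ misses both $S$ and $T$. The contrapositive of the defining property of $H(S) = a$ then produces a subfamily $\mathcal F_S \subseteq \mathcal F$ of at most $a$ sets whose intersection avoids $S$; likewise, $H(T) = b$ gives a subfamily $\mathcal F_T \subseteq \mathcal F$ of at most $b$ sets whose intersection avoids $T$. The union $\mathcal F_S \cup \mathcal F_T$ has at most $a+b$ members, and its intersection lies inside both $\bigcap \mathcal F_S$ and $\bigcap \mathcal F_T$, so it misses $S$ and it misses $T$, hence misses $S \cup T$. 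This contradicts the hypothesis on $\mathcal F$ and finishes the proof.

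There is essentially no obstacle here; the lemma is a clean consequence of unpacking the definition. It is worth noting why the apparently natural Hoffman-based route is less convenient: given an intersect-empty set $X \subseteq S \cup T$ and the decomposition $X = X_S \sqcup X_T$ with $X_S \subseteq S$, $X_T \subseteq T$, the set $X_S$ need not be intersect-empty in $S$, because $\bigcap_{x \in X_S} \conv(X_S \setminus x)$ is taken over a smaller index set and is therefore a \emph{larger} region than $\bigcap_{x \in X} \conv(X \setminus x)$, so the hypothesis that the latter avoids $S \cup T$ does not transfer. Working from the definition sidesteps this issue entirely.
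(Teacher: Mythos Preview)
Your main argument is correct and is a genuinely different route from the paper's. The paper argues via Hoffman's characterization (\cref{thm:empty-polytopes}): any $X \subseteq S \cup T$ with $H(S)+H(T)+1$ points contains, by pigeonhole, a subset $Y$ of more than $H(S)$ points of $S$ (or symmetrically for $T$); since $|Y| > H(S)$, Hoffman says $Y$ is not intersect-empty in $S$, and one then checks that $X$ is not intersect-empty in $S\cup T$. Your contrapositive-from-the-definition proof avoids invoking Hoffman at all and is arguably cleaner, since it needs nothing beyond \cref{def:Helly-number}.

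However, your closing paragraph misdiagnoses the Hoffman route. The paper does not try to show that $X_S$ is intersect-empty; it shows that a large enough subset $Y \subseteq X$ is \emph{not} intersect-empty in $S$, and then lifts this to $X$. The key containment goes the right way: for $y \in Y$ one has $\conv(Y\setminus y) \subseteq \conv(X\setminus y)$, and for $x \in X\setminus Y$ one has $\conv(Y) \subseteq \conv(X\setminus x)$; combining these gives
\[
\bigcap_{x \in X}\conv(X\setminus x) \;\supseteq\; \bigcap_{y \in Y}\conv(Y\setminus y),
\]
so a point of $S$ in the right-hand side also lies in the left-hand side, and $X$ fails to be intersect-empty in $S\cup T$. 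So the Hoffman-based approach is perfectly sound; your direct argument simply bypasses the need for this small verification.
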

\begin{proof}
Any set $X \subseteq S \cup T$ of $H(S) + H(T) + 1$ points either contains $H(S) + 1$ points of $S$ or $H(T) + 1$ points of $T$. By \cref{thm:empty-polytopes}, the set $X$ cannot be intersect-empty, and therefore any intersect-empty set in $S \cup T$ has at most $H(S) + H(T)$ vertices.
\end{proof}

We need one last lemma.

\begin{lemma}\label{thm:convex-intersection-helly}
If $C$ is convex, then $H(S \cap C) \leq H(S)$.
\end{lemma}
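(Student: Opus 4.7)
The plan is to leverage Hoffman's characterization (\cref{thm:empty-polytopes}) of the Helly number as the maximum cardinality of an intersect-empty set. Since $H(S \cap C)$ equals the largest size of an intersect-empty subset of $S \cap C$, and likewise for $H(S)$, it suffices to show that every intersect-empty subset of $S \cap C$ is also intersect-empty when viewed as a subset of $S$.

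So suppose $X \subseteq S \cap C$ is intersect-empty in $S \cap C$, meaning $\bigl(\bigcap_{x \in X} \conv(X \setminus x)\bigr) \cap (S \cap C) = \emptyset$. The key observation is that, because $C$ is convex and $X \subseteq C$, we have $\conv(X \setminus x) \subseteq C$ for every $x \in X$, and therefore
\[ \bigcap_{x \in X} \conv(X \setminus x) \;\subseteq\; C. \]
Intersecting this containment with $S$ gives $\bigl(\bigcap_{x \in X} \conv(X \setminus x)\bigr) \cap S = \bigl(\bigcap_{x \in X} \conv(X \setminus x)\bigr) \cap (S \cap C) = \emptyset$, so $X$ is intersect-empty in $S$.

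This immediately gives $H(S \cap C) \leq H(S)$: if $H(S) = \infty$ the inequality is trivial, and otherwise the maximum cardinality of an intersect-empty set in $S \cap C$ is bounded above by the maximum cardinality of an intersect-empty set in $S$, which is $H(S)$ by \cref{thm:empty-polytopes}. There is really no main obstacle here; the only subtlety is to remember that Hoffman's theorem applies to arbitrary $S \subseteq \R^d$ (so it applies to $S \cap C$ as well), and that the convexity of $C$ is used exactly to ensure that the intersection of convex hulls of subsets of $X$ stays inside $C$.
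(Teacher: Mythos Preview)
Your proof is correct. The paper's primary argument is slightly different: it works directly from the definition of $H(S,n)$ by replacing a family $\mathcal F$ with $\mathcal G = \{F \cap C : F \in \mathcal F\}$ and observing that an $(S \cap C)$-point in every $H(S)$-fold intersection of $\mathcal F$ is an $S$-point in every $H(S)$-fold intersection of $\mathcal G$, so $\bigcap \mathcal G$ (and hence $\bigcap \mathcal F$) meets $S \cap C$. However, immediately after that proof the paper remarks that one could alternatively note that every intersect-empty set in $S \cap C$ is also intersect-empty in $S$---precisely your argument. So your route is correct and is explicitly acknowledged by the paper; the direct definition-based proof avoids invoking \cref{thm:empty-polytopes}, while yours makes the reason for the inequality (inclusion of intersect-empty sets) more transparent.
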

\begin{proof}
If the intersection of every $H(S)$ members of $\mathcal F$ contains a point in $S \cap C$, then the intersection of every $H(S)$ members of $\mathcal G := \{ F \cap C : F \in \mathcal F\}$ contains a point in $S$. It follows that $\bigcap \mathcal G$ contains a point in $S$, which implies that $\bigcap \mathcal F$ contains a point in $S \cap C$.
\end{proof}

Alternatively, one could note that every intersect-empty set in $S \cap C$ is also intersect-empty in $S$.

\begin{theorem}
If $Q$ is a periodic product set in $\R^d$, then $\h(Q,n) \leq 4^d \rho(Q)^2$ for every $n \in \N$.
\end{theorem}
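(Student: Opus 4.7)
The plan is to apply \cref{thm:basic-box-Helly} with $S = T = Q$ and $f(x,y) = \lvert\bx(x,y) \cap Q\rvert$, after (i)~reducing the hypothesis to boxes parametrized in $Q \times Q$ and (ii)~bounding $H\big((Q \times Q) \cap f^{-1}([n, \infty))\big)$ by $4^d\rho(Q)^2$. For~(i), given a box $\bx(x,y)$ with at least $n$ points of $Q$, replace each pair $x_i, y_i$ by the $\min$ and $\max$ of $A_i \cap [x_i, y_i]$; this shrinks $\bx(x, y)$ to a box with corners in $Q$ that contains the same $Q$-points and still lies in every set of the family.

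Let $\Lambda = p_1\Z \times \cdots \times p_d\Z$. Because each $A_i$ is a disjoint union of $m_i(Q)$ translates of $p_i\Z$, the set $Q \times Q$ decomposes into $\rho(Q)^2$ translates of $\Lambda \times \Lambda \subseteq \R^{2d}$, indexed by the \emph{class} $c$ recording the residue of each coordinate of $x$ and $y$ modulo its period. The key observation is that within a class, increasing $y_i - x_i$ by $p_i$ adds exactly $m_i(Q)$ points of $A_i$ to $[x_i, y_i]$; hence $\lvert A_i \cap [x_i, y_i]\rvert$ is an affine function of $y_i - x_i$, and there exist class-dependent constants $c_{i,c}$ with
$$f(x, y) = \prod_{i=1}^d \big((y_i - x_i)m_i(Q)/p_i + c_{i,c}\big) \quad\text{for } (x, y) \in c + \Lambda \times \Lambda,\ y \geq x,$$
each factor being a positive integer.

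Let $W_c \subseteq \R^{2d}$ be the set of $(x,y)$ with $y \geq x$, with every factor $(y_i - x_i)m_i(Q)/p_i + c_{i,c}$ strictly positive, and with $\sum_{i=1}^d \log\bigl((y_i - x_i)m_i(Q)/p_i + c_{i,c}\bigr) \geq \log n$. Each summand is concave on the region where its factor is positive, so $W_c$ is a finite intersection of convex sets, hence convex; and by the previous paragraph, $W_c$ agrees with $f^{-1}([n, \infty))$ on $c + \Lambda \times \Lambda$. Helly numbers are invariant under invertible affine transformations, so $H(c + \Lambda \times \Lambda) = H(\Z^{2d}) = 2^{2d}$; combined with \cref{thm:convex-intersection-helly}, this gives $H\big((c + \Lambda \times \Lambda) \cap W_c\big) \leq 2^{2d}$. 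Iterating \cref{thm:Helly-union-bound} over the $\rho(Q)^2$ classes then yields $H\big((Q \times Q) \cap f^{-1}([n, \infty))\big) \leq 4^d \rho(Q)^2$, and \cref{thm:basic-box-Helly} completes the proof.

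The main obstacle is the second paragraph: recognizing that the count is genuinely affine (not merely approximately) in $y - x$ on each class, so that its product form is log-concave on the class and the superlevel set $W_c$ is convex. The constants $c_{i,c}$ can even be negative, but they are always positive on the relevant lattice translate, which is all that matters. Once the affine formula is in hand, the rest is a direct combination of \cref{thm:convex-intersection-helly}, \cref{thm:Helly-union-bound}, and \cref{thm:basic-box-Helly}.
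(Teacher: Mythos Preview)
Your proof is correct and follows essentially the same approach as the paper: decompose $Q\times Q$ into $\rho(Q)^2$ residue classes modulo the period lattice, observe that on each class the point count factors as a product of affine functions of $y_i-x_i$ (hence is logconcave there), apply \cref{thm:convex-intersection-helly} and Doignon's theorem to bound each class's contribution by $2^{2d}$, sum via \cref{thm:Helly-union-bound}, and finish with \cref{thm:basic-box-Helly}. The paper's sets $Q_{a,b}$ and functions $f_{a,b}$ are exactly your classes $c+\Lambda\times\Lambda$ and the product formula with constants $c_{i,c}$, written out explicitly.
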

\begin{proof}
Factor $Q$ as $A_1 \times \cdots \times A_d$, suppose the period of $Q$ is $p \in \R^d$, and let $B$ be the fundamental box of $Q$. We set
\[  c_i(a,b) = \begin{cases}
        \big\lvert [a_i,b_i] \cap A_i \big\rvert &\text{ if } b_i \geq a_i \\
        \big\lvert [a_i,b_i + p_i] \cap A_i\big\rvert &\text{ if } b_i < a_i
    \end{cases} \]
for each $a,b \in B \cap Q$ and $i \in [d]$ and set
\[  Q_{a,b} = \{(x,y) \in Q^2 : x_i \equiv a_i \smallmod p_i \text{ and } y_i \equiv b_i \smallmod p_i \text{ for every } i \in [d]\} \]
for each $a,b \in B \cap Q$. Moreover, we define
\[ f_{a,b}(x,y) = \prod_{i=1}^d \Big(m_i(Q)\big((y_i-x_i)-c_i(a,b)\big)/p_i + c_i(a,b)\!\Big), \]
which is logconcave (and therefore min-concave) for every pair $(a,b) \in B \cap Q$, and
\[  f(x,y) = \begin{cases}
        f_{a,b}(x,y) &\text{if } y \geq x \text{ and } (x,y) \in Q_{a,b}\\
        0 &\text{if } y \not\geq x.
    \end{cases} \]
The value $f(x,y)$ is exactly the number of points of $Q$ in $\bx(x,y)$ when $x,y \in Q$ and $y \geq x$. Since $Q_{a,b} \cap f^{-1}([n,\infty)) = Q_{a,b} \cap f^{-1}_{a,b}([n,\infty))$ and $f_{a,b}$ is min-concave, \cref{thm:convex-intersection-helly} provides the upper bound $H\big(Q_{a,b} \cap f^{-1}([n,\infty))\big) \leq H(Q_{a,b})$. Each set $Q_{a,b}$ is an affine image of $\Z^{2d}$, so
\[  H\big(Q^2 \cap f^{-1}([n,\infty))\big)
    = H\bigg( \bigcup_{a,b \in B\, \cap\, Q}\! \big(Q_{a,b} \cap f^{-1}([n,\infty))\big)\!\bigg)
    \leq\! \sum_{a,b \in B \cap Q}\!\! H(Q_{a,b})
    = \rho(Q)^2 2^{2d}\]
by \cref{thm:Helly-union-bound} and Doignon's theorem. A set contains a box with at least $n$ points in $Q$ if and only if it contains such a box that is parametrized in $Q \times Q$. Appealing to \cref{thm:basic-box-Helly} completes the proof.
\end{proof}

We quickly prove two more consequences of \cref{thm:box-weighting-min-concave}. In the continuous setting, there is no exact Helly-type theorem for surface area, although Rolnick and Sober\'on \cite{quant-p-q-theorems} proved a version with arbitrarily small loss. There is, however, such a theorem with boxes as witness sets.

Recall that the $k$-skeleton of a polytope is the union of its $k$-dimensional faces.

\begin{proposition}\label{thm:box-k-skeleton}
Let $\mathcal F$ be a finite family of convex sets in $\R^d$ and fix some $w \in \R$. If the intersection of any $2d$ or fewer members of $\mathcal F$ contains an axis-parallel box whose $k$-skeleton has $k$-dimensional volume at least $w$, then $\bigcap \mathcal F$ contains such a box, as well.
\end{proposition}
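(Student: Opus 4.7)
The plan is to apply \cref{thm:box-weighting-min-concave} with $S = T = \R^d$ and a suitable box-weighting function $f$. Once min-concavity of $f$ is established, each projection satisfies $\pi_i(\R^d\times\R^d) = \R^{2d-1}$, so Helly's theorem gives $H(\R^{2d-1}) = 2d$ and produces exactly the claimed bound. So the entire task reduces to exhibiting a min-concave $f$ whose value on $\bx(x,y)$ equals the $k$-dimensional volume of its $k$-skeleton.

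First I would write $f$ in closed form. A $k$-face of $\bx(x,y)$ is specified by choosing a $k$-subset $I\subseteq [d]$ of free coordinates together with, for each $i\notin I$, a choice of $x_i$ or $y_i$ at which to pin the $i$-th coordinate; this face is itself an axis-parallel $k$-dimensional box with $k$-volume $\prod_{i\in I}(y_i - x_i)$, and distinct $k$-faces meet in sets of $k$-measure zero. Summing over the $\binom{d}{k}\cdot 2^{d-k}$ faces yields
\[ f(x,y) \;=\; 2^{d-k} \sum_{|I|=k}\,\prod_{i\in I}(y_i - x_i) \;=\; 2^{d-k}\, e_k(y-x), \]
where $e_k$ denotes the $k$-th elementary symmetric polynomial.

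It remains to check that $f$ is min-concave on $\{(x,y)\in \R^d\times \R^d : y\geq x\}$. Since the map $(x,y)\mapsto y-x$ is affine and carries this set into the nonnegative orthant $\R_{\geq 0}^d$, it suffices to show that $e_k$ is quasi-concave on $\R_{\geq 0}^d$. This is a classical consequence of the concavity of $e_k^{1/k}$ on the nonnegative orthant, which can be derived from G\aa{}rding's theory of hyperbolic polynomials, or equivalently from the Alexandrov--Fenchel inequality applied to mixed volumes of the segments $[0, \ell_i e_i]$: for $w\geq 0$ the superlevel set $\{e_k \geq w\}$ is exactly the convex set $\{e_k^{1/k} \geq w^{1/k}\}$, and for $w < 0$ it is all of $\R_{\geq 0}^d$. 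The main obstacle lies precisely here: for $1 < k < d$ the polynomial $e_k$ is in general neither concave nor (evidently) log-concave, so one genuinely needs the sharper statement about $e_k^{1/k}$. In a self-contained writeup I would either cite this or sketch a proof via the Newton--Maclaurin inequalities, after which the appeal to \cref{thm:box-weighting-min-concave} is immediate.
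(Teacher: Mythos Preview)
Your proposal is correct and follows essentially the same route as the paper: write the $k$-skeleton volume as $2^{d-k}e_k(y-x)$, invoke the classical concavity of $e_k^{1/k}$ on the nonnegative orthant (the paper cites Marcus--Lopes for this, where you gesture at G\aa{}rding or Alexandrov--Fenchel) to deduce min-concavity, and then apply \cref{thm:box-weighting-min-concave} with $S=T=\R^d$. The only difference is the reference for the concavity of $e_k^{1/k}$; the argument is otherwise identical.
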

\begin{proof}
The $k$-dimensional volume of the $k$-skeleton of $\bx(x,y)$ is given by the elementary symmetric polynomial
\[ f(x,y) = 2^{d-k}\sum_{V \in \binom{[d]}{k}}\ \prod_{i \in V} (y_i - x_i). \]
Marcus and Lopes \cite{symmetric-functions-inequalities} proved that $f^{1/k}$ is concave on the set $\{ (x,y) \in \R^d \times \R^d : y \geq x \}$, which implies that $f$ is a min-concave box-weighting function. Applying \cref{thm:box-weighting-min-concave} with $S = T = \R^d$  finishes the proof.
\end{proof}

Another application of elementary symmetric polynomials yields the following extension of \cref{thm:box-helly-integer-lattice}.

\begin{proposition}\label{thm:k-box-discrete}
Let $\mathcal F$ be a finite family of convex sets in $\R^d$. If the intersection of any $2^{2d-1}$ or fewer members of $\mathcal F$ contains an axis-parallel box of dimension (at most) $k$ that itself contains at least $n$ lattice points, then $\bigcap \mathcal F$ contains a box of dimension (at most) $k$ with at least $n/\binom{d}{k}$ lattice points.
\end{proposition}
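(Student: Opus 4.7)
The approach is to apply \cref{thm:box-weighting-min-concave} to a box-weighting function built from the $k$th elementary symmetric polynomial and then extract a $k$-dimensional sub-box by pigeonhole. Let $e_k$ denote the $k$th elementary symmetric polynomial in $d$ variables and define
\[ f(x, y) = e_k(y_1 - x_1 + 1,\, \ldots,\, y_d - x_d + 1) \]
on $\{(x, y) \in \R^d \times \R^d : y \geq x\}$. The theorem of Marcus and Lopes cited in the proof of \cref{thm:box-k-skeleton} asserts that $e_k^{1/k}$ is concave on the nonnegative orthant, and since each $z_i := y_i - x_i + 1$ is an affine function of $(x,y)$ that is nonnegative on $\{y \geq x\}$, the function $f^{1/k}$ is concave there; hence $f$ is a min-concave box-weighting function.

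Next I would verify the hypothesis of \cref{thm:box-weighting-min-concave} with $S = T = \Z^d$ and $w = n$. Suppose $\bx(x,y)$ is a box of dimension at most $k$ containing at least $n$ lattice points. Replacing $x$ with $\lceil x\rceil$ and $y$ with $\lfloor y\rfloor$ coordinatewise does not decrease the lattice count or destroy containment, so we may assume $(x,y) \in \Z^d \times \Z^d$. Setting $V = \{i : y_i > x_i\}$, we have $|V| \leq k$, and any $V' \in \binom{[d]}{k}$ containing $V$ contributes a term $\prod_{i \in V'}(y_i - x_i + 1) = \prod_{i \in V}(y_i - x_i + 1) \geq n$ to $e_k$; since the remaining terms are nonnegative, $f(x,y) \geq n$.

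Because $H(\pi_i(\Z^{2d})) = H(\Z^{2d-1}) = 2^{2d-1}$ for each $i$, \cref{thm:box-weighting-min-concave} yields a lattice pair $(a,b) \in \Z^d \times \Z^d$ with $\bx(a,b) \subseteq \bigcap \mathcal F$ and $f(a,b) \geq n$. The sum defining $f(a,b)$ has $\binom{d}{k}$ terms, so pigeonhole produces a $V_0 \in \binom{[d]}{k}$ with $\prod_{i \in V_0}(b_i - a_i + 1) \geq n/\binom{d}{k}$; collapsing $\bx(a,b)$ to its $V_0$-supported face---replacing $b_i$ with $a_i$ for every $i \notin V_0$---yields a box of dimension at most $k$, contained in $\bigcap \mathcal F$, with exactly $\prod_{i \in V_0}(b_i - a_i + 1) \geq n/\binom{d}{k}$ lattice points. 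The only step that is not a straightforward reduction is the min-concavity of $f$, which follows from Marcus--Lopes after the affine change of variable $z_i = y_i - x_i + 1$.
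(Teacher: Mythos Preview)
Your proof is correct and follows essentially the same approach as the paper: apply \cref{thm:box-weighting-min-concave} with $S=T=\Z^d$ to the $k$th elementary symmetric polynomial in the variables $y_i-x_i+1$, then use pigeonhole among the $\binom{d}{k}$ summands to locate a $k$-dimensional face with enough lattice points. If anything, you are more explicit than the paper in verifying that a $(\leq k)$-dimensional box with $n$ lattice points actually has $f$-weight at least $n$ and in justifying min-concavity via Marcus--Lopes after the affine substitution.
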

\begin{proof}
Apply \cref{thm:box-weighting-min-concave} to the elementary symmetric polynomial
\[  f(x,y) = \sum_{V \in \binom{[d]}{k}}\ \prod_{i \in V} (y_i - x_i + 1) \]
with $S = T = \Z^d$ and $w=n$ to obtain $a,b \in \Z^d$ such that $\bx(a,b) \subseteq \bigcap \mathcal F$ and $f(a,b) \geq n$. By the majority principle, there is a set $U \in \binom{[d]}{k}$ so that $\prod_{i \in U} (b_i - a_i + 1) \geq n/\binom{d}{k}$; setting $b' = \sum_{i \in U} b_ie_i + \sum_{i \notin U} a_ie_i$, the number of lattice points in the $k$-dimensional $\bx(a,b')$ is at least $n/\binom{d}{k}$.
\end{proof}

An essentially identical proof using the function in the proof of \cref{thm:box-k-skeleton} yields the following continuous version.

\begin{proposition}\label{thm:k-box-cont}
Let $\mathcal F$ be a finite family of convex sets in $\R^d$. If the intersection of any $2d$ or fewer members of $\mathcal F$ contains an axis-parallel box of dimension (at most) $k$ with $k$-dimensional volume at least $w$, then $\bigcap \mathcal F$ contains a box of dimension (at most) $k$ with $k$-volume at least $w/\binom{d}{k}$.
\end{proposition}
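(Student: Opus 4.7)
The plan is to mimic the proof of \cref{thm:k-box-discrete} line by line, replacing the lattice-point count by its continuous analog. Specifically, I would work with the elementary symmetric polynomial
\[ f(x,y) = \sum_{V \in \binom{[d]}{k}} \prod_{i \in V}(y_i - x_i) \]
on the convex set $\{(x,y) \in \R^d \times \R^d : y \geq x\}$. This is the same polynomial that appears in the proof of \cref{thm:box-k-skeleton} (up to the irrelevant constant factor $2^{d-k}$), so the Marcus--Lopes result cited there immediately gives that $f^{1/k}$ is concave, hence $f$ is a min-concave box-weighting function.

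Next I would verify the hypothesis of \cref{thm:box-weighting-min-concave} with $S = T = \R^d$ and weight $w$. If the intersection of $2d$ members of $\mathcal F$ contains an axis-parallel box of dimension at most $k$ with $k$-volume at least $w$, then that box has the form $\bx(x,y)$ for some $y \geq x$ whose nonzero side lengths are indexed by some $V \in \binom{[d]}{k}$ with $\prod_{i \in V}(y_i - x_i) \geq w$. The corresponding single term makes $f(x,y) \geq w$, so $\bx(x,y)$ (viewed as a possibly degenerate $d$-dimensional box) is a witness parametrized in $\R^d \times \R^d$ with weight at least $w$. Since $\min_i H(\pi_i(\R^d \times \R^d)) = H(\R^{2d-1}) = 2d$, \cref{thm:box-weighting-min-concave} produces $a,b \in \R^d$ with $\bx(a,b) \subseteq \bigcap \mathcal F$ and $f(a,b) \geq w$.

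Finally, I would apply the majority/pigeonhole step exactly as in the discrete case: some index set $U \in \binom{[d]}{k}$ must contribute a term of size at least $w/\binom{d}{k}$, i.e.\ $\prod_{i \in U}(b_i - a_i) \geq w/\binom{d}{k}$. Setting $b' = \sum_{i \in U} b_i e_i + \sum_{i \notin U} a_i e_i$, the box $\bx(a,b')$ is at most $k$-dimensional (it is degenerate along coordinates outside $U$), lies inside $\bx(a,b) \subseteq \bigcap \mathcal F$, and has $k$-dimensional volume exactly $\prod_{i \in U}(b_i - a_i) \geq w/\binom{d}{k}$, as required.

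I do not expect any real obstacle here; the only place that requires any thought is confirming that the min-concavity of $f$ on $\{y \geq x\}$ suffices for \cref{thm:box-weighting-min-concave} (which it does, since that theorem's proof only uses convexity of $B_F \cap V$ where $V$ already enforces $y \geq x$) and noting that the reduction of a $k$-dimensional witness box to a full $d$-dimensional one via padding its degenerate sides is legitimate because $f$ is nonnegative and the single nontrivial term already certifies $f \geq w$.
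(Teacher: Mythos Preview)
Your proposal is correct and matches the paper's own argument: the paper simply states that an ``essentially identical proof'' to \cref{thm:k-box-discrete}, using the function from the proof of \cref{thm:box-k-skeleton}, yields the result. Your write-up is precisely that, including the Marcus--Lopes min-concavity step, the application of \cref{thm:box-weighting-min-concave} with $S=T=\R^d$, and the pigeonhole extraction of the $k$-dimensional subbox.
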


The collection of $2d$ half-spaces whose intersection is the unit cube show that the number $2d$ in propositions \ref{thm:box-k-skeleton} and \ref{thm:k-box-cont} cannot be reduced to $2d-1$. Doignon's theorem shows that $2^{2d-1}$ in \cref{thm:k-box-discrete} cannot be replaced by any number smaller than $2^d$. The optimal Helly number in this case is not clear.

\subsection{Fractional Helly-type theorems}

Although in this section we focus on the integer lattice, straightforward modifications to the proofs provide analogous statements for any periodic product set. A key ingredient in proving fractional versions of \cref{thm:box-helly-integer-lattice} will be the mixed-integer Helly theorem (first proved by Hoffman \cite{binding-constraints}), which says that $H(\R^a \times \Z^b) = (a+1)2^b$. We also use the following definition.

\begin{definition}
A box where every edge has length at least $t$ is called \textit{$t$-thick}.
\end{definition}

\begin{proposition}\label{thm:lower-Helly-number1}
Let $\mathcal F$ be a finite family of convex sets in $\R^d$. Let $m \in \{1,\dots,d\}$ and $t \geq 0$. If the intersection of every $(m+1)2^{2d-m-1}$ or fewer members of $\mathcal F$ contains a $t$-thick box with at least $n$ lattice points, then $\bigcap \mathcal F$ contains a $t$-thick box with at least $n/\big(1+\frac{1}{\lfloor t\rfloor + 1}\big)^m$ lattice points.
\end{proposition}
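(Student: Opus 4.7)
The plan is to apply \cref{thm:box-weighting-min-concave} to an asymmetric parameter space, with $t$-thickness enforced as an additional convex constraint.

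Let $S = \R^m \times \Z^{d-m}$ and $T = \Z^d$, so that $S \times T$ is an affine image of $\R^m \times \Z^{2d-m}$. By the mixed-integer Helly theorem, $H(\pi_k(S \times T)) = (m+1)2^{2d-m-1}$ when $k$ indexes one of the $2d-m$ integer coordinates; for $m \geq 1$ this is the smallest projection Helly number, giving $\min_k H(\pi_k(S \times T)) = (m+1)2^{2d-m-1}$. Take $f(x, y) = \prod_{i=1}^m (y_i - x_i) \prod_{i=m+1}^d (y_i - x_i + 1)$, which is logconcave---hence min-concave---on $\{y > x\}$; for any $(x, y) \in S \times T$ with $y \geq x$, the lattice count of $\bx(x, y)$ is bounded below by $f(x, y)$ via $y_i - \lceil x_i \rceil + 1 \geq y_i - x_i$ in the first $m$ coordinates.

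Enforce $t$-thickness by modifying the proof of \cref{thm:box-weighting-min-concave} to use $V = \{(x, y) : y \geq x + t\,\mathbf{1},\ f(x, y) \geq w\}$; as a convex set, this does not affect the argument. With $w = n/(1 + 1/(\lfloor t \rfloor + 1))^m$, the Helly conclusion yields a parameter $(x^*, y^*) \in V \cap (S \times T) \cap B_{\bigcap \mathcal F}$, producing the desired $t$-thick box in $\bigcap \mathcal F$ with lattice count at least $f(x^*, y^*) \geq w$. The loss estimate compares the lattice count to $f$ coordinate by coordinate: for $i > m$ the ratio is exactly $1$, while for $i \leq m$ the ratio $(y_i - \lceil x_i \rceil + 1)/(y_i - x_i)$ is bounded above by $1 + 1/(\lfloor t \rfloor + 1)$, with the extremal case being $y_i - x_i$ just above $\lfloor t \rfloor + 1$ and $x_i$ just below an integer.

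The main technical step is to construct, for each subfamily of size $(m+1)2^{2d-m-1}$, a local witness parameter in $V \cap (S \times T) \cap B_{\bigcap \mathcal F_{\mathrm{loc}}}$. Given a local $t$-thick box $\bx(x_0, y_0)$ with at least $n$ lattice points, I would construct $(x, y)$ by keeping $x_i = x_{0,i}$ for $i \leq m$, setting $x_i = \lceil x_{0,i} \rceil$ for $i > m$, and setting $y_i = \lfloor y_{0,i} \rfloor$ for all $i$. The main obstacle is to ensure this shift preserves $t$-thickness, since naive rounding can reduce thickness by up to one unit per coordinate. I expect to handle this by a case analysis on the fractional parts of the corners of $\bx(x_0, y_0)$, balancing the rounding direction in each coordinate to maintain thickness while keeping the parameter inside $\bx(x_0, y_0)$ and in $S \times T$, at the cost of the claimed loss factor in $f$.
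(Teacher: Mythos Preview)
Your framework is the right one—apply \cref{thm:box-weighting-min-concave} over a mixed parameter set affinely equivalent to $\R^m\times\Z^{2d-m}$—but you have placed the loss factor on the wrong side of the argument, and that is precisely what creates the obstacle you flag in the last paragraph.

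The paper takes $S=\Z^d$, $T=\R^m\times\Z^{d-m}$, and uses the single weight
\[
f(x,y)=\begin{cases}\displaystyle\prod_{i=1}^d(y_i-x_i+1)&\text{if }\min_i(y_i-x_i)\ge t,\\[1ex]0&\text{otherwise,}\end{cases}
\]
applied at level $w=n$. The virtue of this choice is that the \emph{input} side is immediate: any $t$-thick box with at least $n$ lattice points already satisfies $f\ge n$, because $\prod_i(y_i-x_i+1)$ always majorizes the lattice count. There is no rounding, no witness construction, and no case analysis needed to verify the hypothesis. All of the loss is absorbed on the \emph{output} side: the returned $(a,b)\in\Z^d\times(\R^m\times\Z^{d-m})$ has $f(a,b)\ge n$, so $\bx(a,b)$ is $t$-thick and its lattice count equals $\prod_{i\le m}\lfloor b_i-a_i+1\rfloor\prod_{i>m}(b_i-a_i+1)$. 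Minimizing $\lfloor y\rfloor/y$ subject to $y\ge t+1$ is a clean one-variable problem and yields the factor $(\lfloor t\rfloor+1)/(\lfloor t\rfloor+2)$ per real coordinate.

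Your plan instead builds the loss into the threshold $w$ and then must produce, for every small subfamily, a witness in $S\times T$ that is simultaneously $t$-thick and has your $f$ at least $w$. That is the hard direction: rounding inward to land in $\Z$ can push edge lengths below $t$, and you cannot round outward without leaving the given box—no ``balancing of rounding directions'' fixes this. Your loss bound is also not right. With $y_i\in\Z$, $x_i\in\R$, and $y_i-x_i\ge t$, the ratio $(y_i-\lceil x_i\rceil+1)/(y_i-x_i)$ can equal $(y_i-x_i+1)/(y_i-x_i)$ whenever $x_i\in\Z$; for integer $t$ and $y_i-x_i=t$ this is $1+1/t$, strictly larger than your claimed $1+1/(\lfloor t\rfloor+1)$. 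And when $t<1$ your weight $f$ can vanish on a box containing lattice points, so the scheme breaks down entirely. Moving the loss to the output, as the paper does, sidesteps all of this.
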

\begin{proof}
The function
\[  f(x,y) = \begin{cases}
        \prod\limits_{i=1}^d (y_i - x_i + 1) &\text{if } \min_i \{y_i - x_i\} \geq t \\
        \ 0 &\text{otherwise,}
    \end{cases} \]
is min-concave on the set of boxes. If $\bx(x,y)$ is $t$-thick and contains at least $n$ lattice points, then $f(x,y) \geq n$. Applying \cref{thm:box-weighting-min-concave} (and the mixed-integer Helly theorem) to $\mathcal F$ with $S = \Z^d$ and $T = \R^m \times \Z^{d-m}$ provides a point $(a,b) \in \Z^d \times (\R^m \times \Z^{d-m})$ with 
$\bx(a,b) \subseteq \bigcap \mathcal F$ and $f(a,b) \geq n$. Thus, $\bx(a,b)$ is $t$-thick and contains
\[  \prod_{i=1}^m\lfloor b_i - a_i + 1\rfloor\prod_{i=m+1}^{d} (b_i - a_i + 1)\]
lattice points. Using the substitution $y_i = b_i - a_i + 1$, the product is bounded below by
\[
    \min \left\{ \prod_{i=1}^m \lfloor y_i \rfloor\prod_{i=m+1}^d y_i\, :\, y_i \geq t+1 \text{ and } \prod_{i=1}^d y_i = n \right\}.
\]
Via the product condition $\prod_{i=1}^d y_i = n$, this is equal to 
\begin{equation}\label{eq:fractional-min-one}
    \min \left\{ n\prod_{i=1}^m \frac{\lfloor y_i \rfloor}{y_i}\, :\, y_i \geq t+1 \text{ and } \prod_{i=1}^d y_i = n \right\}.
\end{equation}
Since
\[
    \frac{\lfloor y_i\rfloor}{y_i}
    \geq \frac{\lfloor y_i\rfloor}{\lfloor y_i\rfloor + 1},
\]
the minimum \eqref{eq:fractional-min-one} is bounded below by the value of $n\prod_{i=1}^m \frac{\lfloor y_i\rfloor}{\lfloor y_i\rfloor + 1}$ when $y_i = t + 1$ for every $1 \leq i \leq d$, namely
\[
    \frac{n}{\left(1+\frac{1}{\lfloor t\rfloor + 1}\right)^m}.\qedhere
\]
\end{proof}

Dropping the thickness requirement on boxes (that is, ``requiring'' 0-thick boxes) guarantees $n/2^m$ points in $\bigcap \mathcal F$, a relatively small fraction of $n$. On the other hand, the proportion of points guaranteed by \cref{thm:lower-Helly-number1} increases rapidly as $t$ increases, which shows that the loss is an artifact of the possible thinness of the box in many directions; thickening the boxes guarantees a much larger fraction of points. In fact, taking $t = d-1$ in \cref{thm:lower-Helly-number1} guarantees a box with at least $n/e$ lattice points in the intersection.

The Helly number can be reduced even further in exchange for a smaller intersection.

\begin{proposition}\label{thm:lower-Helly-number2}
Let $\mathcal F$ be a finite family of convex sets in $\R^d$. Let $m \in \{1,\dots,d-1\}$ and $t \geq 1$. If the intersection of every $(d+m+1)2^{d - m - 1}$ or fewer members of $\mathcal F$ contains a $t$-thick box with at least $n$ lattice points, then $\bigcap \mathcal F$ contains a $t$-thick box with at least $n/\big((1 + \frac{1}{\lfloor t\rfloor+1})^{d-m}(1 + \frac{2}{\lfloor t\rfloor})^m\big)$ points. The same statement is true for $m=d$ if $(d+m+1)2^{d - m - 1}$ is replaced by $2d$.
\end{proposition}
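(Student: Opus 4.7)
The plan is to copy the proof of \cref{thm:lower-Helly-number1} almost verbatim, changing only the parameter sets $S$ and $T$ that feed into \cref{thm:box-weighting-min-concave}. For $1 \leq m \leq d-1$ I would take $S = \R^m \times \Z^{d-m}$ and $T = \R^d$, in contrast with the choice $S = \Z^d$, $T = \R^m \times \Z^{d-m}$ used in the previous proof. This shifts the real-valued slack from the upper corner onto the lower corner (while pushing the upper corner fully to $\R^d$), so that $S \times T$ is an affine image of $\R^{d+m} \times \Z^{d-m}$. The mixed-integer Helly theorem then yields $H(\pi_i(S \times T)) = (d+m+1)\,2^{d-m-1}$ for projections along an integer coordinate and $(d+m)\,2^{d-m}$ along a real coordinate; since $d+m \geq 2$, the former is the minimum and matches the Helly number in the hypothesis. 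For the boundary case $m=d$ I would instead take $S = T = \R^d$, so that $S \times T = \R^{2d}$ and the relevant Helly number is simply $H(\R^{2d-1}) = 2d$.

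The thickness-gated box-weighting function $f(x,y)$ from the previous proof is still min-concave, so \cref{thm:box-weighting-min-concave} produces a $t$-thick box $\bx(a,b) \subseteq \bigcap \mathcal F$ with $\prod_i (b_i - a_i + 1) \geq n$. The remaining task is to count the lattice points of $\bx(a,b)$. The $d-m$ coordinates for which $a_i \in \Z$ contribute the exact count $\lfloor b_i - a_i\rfloor + 1$, as in \cref{thm:lower-Helly-number1}. The new contribution, and the main point to verify carefully, is the behavior of the $m$ coordinates where both $a_i$ and $b_i$ are real: here the adversary may place the interval $[a_i,b_i]$ so that it misses an integer at each end, forcing the count down to $\lfloor b_i - a_i\rfloor$ in the worst case. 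Setting $y_i = b_i - a_i + 1$, the lattice count in $\bx(a,b)$ is therefore at least $\prod_{i \leq m}(\lfloor y_i\rfloor - 1)\,\prod_{i > m}\lfloor y_i\rfloor$.

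The last step is the coordinate-wise optimization, which I would carry out exactly as at the end of the proof of \cref{thm:lower-Helly-number1}. Dividing through by $\prod y_i \geq n$ and using $y_i < \lfloor y_i\rfloor + 1$ together with $\lfloor y_i\rfloor \geq \lfloor t\rfloor + 1$ (which follows from $y_i \geq t+1$), each real-real factor is bounded below by $\lfloor t\rfloor/(\lfloor t\rfloor + 2) = 1/(1 + 2/\lfloor t\rfloor)$ and each integer-real factor by $(\lfloor t\rfloor + 1)/(\lfloor t\rfloor + 2) = 1/(1 + 1/(\lfloor t\rfloor + 1))$. Multiplying these yields the claimed lower bound. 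The main substantive point to get right is the asymmetry in loss between the two coordinate types: a real-real coordinate can shed an integer at each endpoint, not just one, which is why the larger factor $(1 + 2/\lfloor t\rfloor)^m$ appears in place of the $(1 + 1/(\lfloor t\rfloor + 1))^m$ that governed the lossy coordinates in the previous proposition.
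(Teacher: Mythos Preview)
Your proposal is correct and essentially identical to the paper's proof: the paper also takes $S=\R^m\times\Z^{d-m}$ and $T=\R^d$, applies \cref{thm:box-weighting-min-concave} with the same thickness-gated $f$, and performs the same coordinate-wise optimization. The only cosmetic difference is that the paper substitutes $y_i=b_i-a_i$ rather than $y_i=b_i-a_i+1$, which shifts the bookkeeping by one but leads to the same inequalities and final bound.
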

\begin{proof}
The proof is similar to that of \cref{thm:lower-Helly-number1}. We apply \cref{thm:box-weighting-min-concave} to $\mathcal F$ with the same function $f$ and $S = \R^m \times \Z^{d-m}$ and $T = \R^d$ to get a point $(a,b) \in (\R^m \times \Z^{d-m}) \times \R^d$ such that $\bx(a,b) \subseteq \bigcap \mathcal F$ and $f(a,b) \geq n$. The set $\bx(a,b)$ contains at least
\[  \prod_{i=1}^m \lfloor b_i - a_i\rfloor \prod_{i=m+1}^{d} \lfloor b_i-a_i + 1\rfloor \]
lattice points. Substituting $y_i = b_i - a_i$, this product is bounded below by 
\[
    \min\left\{ \prod_{i=1}^m \lfloor y_i\rfloor\prod_{i=m+1}^{d} \lfloor y_i + 1\rfloor\,
        :\, y_i \geq t \text{ and } \prod_{i=1}^d (y_i+1) = n \right\}.
\]
Via the product condition $\prod_{i=1}^d (y_i+1) = n$, this is equal to
\begin{equation}\label{eq:fractional-min-two}
    \min\left\{ n\prod_{i=1}^m \frac{\lfloor y_i\rfloor}{y_i+1}\prod_{i=m+1}^{d} \frac{\lfloor y_i \rfloor + 1}{y_i+1}\,
        :\, y_i \geq t \text{ and } \prod_{i=1}^d (y_i+1) = n \right\}.
\end{equation}

Using the inequalities
\[
    \frac{\lfloor y_i\rfloor}{y_i+1}
    \geq \frac{\lfloor y_i\rfloor}{\lfloor y_i\rfloor + 2}
    \text{\quad and \quad}
    \frac{\lfloor y_i \rfloor + 1}{y_i+1}
    \geq \frac{\lfloor y_i\rfloor + 1}{\lfloor y_i\rfloor + 2},
\]
the minimum \eqref{eq:fractional-min-two} is bounded below by the value of $n\prod_{i=1}^m \frac{\lfloor y_i\rfloor}{\lfloor y_i\rfloor+2}\prod_{i=m+1}^{d} \frac{\lfloor y_i \rfloor + 1}{\lfloor y_i\rfloor +2}$ when $y_i = t$ for every $1 \leq i \leq d$, namely
\[
    \frac{n}{%
        \left(1+\frac{1}{\lfloor t\rfloor+1}\right)^{d-m}%
        \left(1+\frac{2}{\lfloor t\rfloor}\right)^m}.\qedhere
\]
\end{proof}

As before, if the boxes are thick enough, the intersection contains a box with a constant proportion of lattice points; taking $t = d$ guarantees $n/e^2$ lattice points in the intersection, for example.

\cref{thm:lower-Helly-number} follows as a special case.

\begin{proof}[Proof of \cref{thm:lower-Helly-number}]
Apply \cref{thm:lower-Helly-number2} with $m=d$ and $t=1$.
\end{proof}

No theorem of this type has a weaker intersection condition.

\begin{example}\label{ex:impossible-fractional}
Let $t \geq 1$ and $\mathcal F$ be the collection of $2d$ half-spaces whose intersection is the hypercube $[0,t]^d$. The intersection of any $2d-1$ members of $\mathcal F$ contains $t$-thick boxes with arbitrarily many lattice points, but $\bigcap \mathcal F$ contains only $(t+1)^d$ lattice points. So a positive proportion of lattice points cannot be guaranteed in $\bigcap \mathcal F$. (Similar hypercube examples have been used for quantitative Helly-type theorems since B\'ar\'any, Katchalski, and Pach's paper \cite{BKP82} in 1982.)
\end{example}

The following example shows that an exponential loss is to some extent unavoidable in results like propositions \ref{thm:lower-Helly-number1} and \ref{thm:lower-Helly-number2}, even for 1-thick boxes.

\begin{example}\label{ex:exponential-loss-lower-Helly-number}
Let $\mathcal F$ be the collection of $2^d$ half-spaces whose intersection is the cross-polytope $\conv(\pm e_1,\dots, \pm e_d)$. The intersection of any $2^d-1$ of these half-spaces contains a 1-thick box with $2^d$ lattice points, but any box in the cross-polytope contains at most 3 lattice points.
\end{example}

\begin{question}
Is the condition of 1-thickness necessary in \cref{thm:lower-Helly-number} and \cref{thm:lower-Helly-number2}?
\end{question}

In \cite{Helly-algebraic-subsets}, De Loera, La Haye, Oliveros, and Rold\'an-Pensado provided a set of sufficient conditions that guarantees the colorful version of a given Helly-type theorem by abstracting Lov\'asz's original proof. Although not stated in their paper, a direct application of their Theorem 5.3 yields the following result.

\begin{theorem}[Colorful mixed-integer Helly theorem]\label{thm:colorful-mixed-integer}
Set $h = (a+1)2^b$ and let $\mathcal F_1, \dots,\mathcal F_h$ be nonempty finite families of convex sets in $\R^{a+b}$. If $\bigcap_{i=1}^h F_i$ contains a point in $\R^a \times \Z^b$ for every choice of sets $F_i \in \mathcal F_i$, then there is an index $k \in [h]$ so that $\bigcap \mathcal F_k$ contains a point in $\R^a \times \Z^b$.
\end{theorem}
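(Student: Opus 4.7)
The plan is to verify the hypotheses of Theorem 5.3 of \cite{Helly-algebraic-subsets} for the set $S = \R^a \times \Z^b$ and invoke it directly, as the authors suggest. That abstract result, which axiomatizes Lov\'asz's topological argument for the classical colorful Helly theorem, promotes any Helly-type theorem for a subset $S \subseteq \R^d$ to its colorful counterpart whenever two ingredients are present: a finite Helly number $H(S)=h$, and a standard separation property asserting that any point of $S$ not lying in a given closed convex set can be separated from it by a closed half-space.

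First I would invoke Hoffman's mixed-integer Helly theorem, quoted earlier in \cref{sec:box-Helly}, which gives $H(\R^a \times \Z^b) = (a+1)2^b$, so we may take $h=(a+1)2^b$. The separation hypothesis is immediate, since $\R^a \times \Z^b$ carries the Euclidean structure of $\R^{a+b}$, and a point outside a closed convex set is separated from it by an affine hyperplane; one may also reduce at the outset to families of closed convex sets without loss of generality. With both hypotheses in hand, Theorem 5.3 of \cite{Helly-algebraic-subsets} applies verbatim to the families $\mathcal F_1, \dots, \mathcal F_h$ and produces an index $k \in [h]$ with $\bigcap \mathcal F_k \cap (\R^a \times \Z^b) \neq \emptyset$, which is the statement to be proved.

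The main subtlety, if one were to unpack the argument rather than black-box Theorem 5.3, would sit in Lov\'asz's optimization step: one selects a rainbow tuple $(F_1,\dots,F_h)$ with $F_i \in \mathcal F_i$ that extremizes a suitable point functional on $\bigcap_i F_i \cap S$, then uses the separation property to swap one color at a time and derive a contradiction from the assumption that no monochromatic intersection meets $\R^a\times\Z^b$. In the mixed-integer setting this requires choosing a linear order on $\R^a \times \Z^b$ that is compatible with both the continuous and the discrete coordinates, together with a mild compactness reduction so that the extremizing points are attained. Since abstracting away exactly this bookkeeping is the point of Theorem 5.3 of \cite{Helly-algebraic-subsets}, the proof of \cref{thm:colorful-mixed-integer} really does reduce to the verification carried out in the preceding paragraph.
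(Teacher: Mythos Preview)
Your proposal is correct and matches the paper's own treatment exactly: the paper does not give an independent proof but simply notes that the result is ``a direct application of their Theorem 5.3'' from \cite{Helly-algebraic-subsets}, and you have spelled out precisely that application by checking $H(\R^a \times \Z^b) = (a+1)2^b$ via Hoffman's theorem together with the standard separation property.
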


The corresponding colorful versions of \cref{thm:lower-Helly-number1} and \cref{thm:lower-Helly-number2} are also true. We stated the vanilla-flavor Helly-type theorems for simplicity, but the colorful incarnations can be proved by replacing the mixed-integer Helly theorem by \cref{thm:colorful-mixed-integer} in the proofs.

Traditionally, fractional Helly-type theorems guarantee that a large subfamily of $\mathcal F$ has a given intersection property. Along these lines, B\'ar\'any and Matou\v{s}ek \cite{Barany-Matousek-fractional-integer} proved the surprising fact that a fractional Helly-type theorem holds for $\Z^d$ with a local intersection condition on $(d+1)$-tuples, despite the Helly number for $\Z^d$ being $2^d$.

\begin{theorem}[B\'ar\'any-Matou\v{s}ek, 2003]\label{thm:barany-matousek}
For every $d\geq 1$ and $\alpha \in (0,1]$ there exists a $\beta > 0$ such that the following is true. If $\bigcap \mathcal H$ contains a lattice point for at least $\alpha \binom{\lvert \mathcal F\rvert}{d+1}$ subcollections $\mathcal H \subseteq \mathcal F$ of size $d+1$, then there is a subfamily $\mathcal G \subseteq \mathcal F$ with at least $\beta \lvert \mathcal F \rvert$ sets whose intersection contains a lattice point.
\end{theorem}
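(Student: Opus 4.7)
The plan is to combine the classical continuous fractional Helly theorem of Katchalski--Liu with a lattice-witness double count.

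\emph{Double-count setup.} For each good $(d+1)$-tuple $\mathcal H$, choose a lattice witness $v(\mathcal H) \in \bigcap \mathcal H \cap \Z^d$, and write $d(v) = |\{F \in \mathcal F : v \in F\}|$. Since the good tuples with witness $v$ are $(d+1)$-subsets of $\{F \in \mathcal F : v \in F\}$,
\[
    \alpha \binom{|\mathcal F|}{d+1} \,\le\, \sum_{v \in \Z^d} \binom{d(v)}{d+1}.
\]
If the witnesses happen to lie in a lattice region of size at most $M$, then some $v^\ast$ satisfies $\binom{d(v^\ast)}{d+1} \ge \alpha\binom{|\mathcal F|}{d+1}/M$, which forces $d(v^\ast) \ge \beta|\mathcal F|$ for a constant $\beta = \beta(d,\alpha,M) > 0$. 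The subfamily $\mathcal G = \{F \in \mathcal F : v^\ast \in F\}$ then has $|\mathcal G| \ge \beta|\mathcal F|$ and $v^\ast \in \bigcap \mathcal G$.

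\emph{Localization.} To force the witnesses into a bounded region, I would first apply the continuous fractional Helly theorem: since every good $(d+1)$-tuple has non-empty intersection as convex sets, there exist $\beta_1 = \beta_1(d,\alpha) > 0$ and a subfamily $\mathcal F_1 \subseteq \mathcal F$ with $|\mathcal F_1| \ge \beta_1|\mathcal F|$ and a common point $p \in \bigcap \mathcal F_1$. I would then replace each $F \in \mathcal F$ by $F \cap B_R(p)$ for an axis-parallel box $B_R(p)$ of radius $R$ around $p$. The goal is to show that for some $R = R(d,\alpha)$ independent of $|\mathcal F|$ the restricted family still admits a positive-density fraction of the good tuples; once this holds, all selected witnesses lie in $B_R \cap \Z^d$, and the double count above applies with $M = (2R+1)^d$.

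The core obstacle is establishing that such a bounded $R$ exists. A natural approach is an iterative fractional-Helly argument at successively larger scales: whenever the good witnesses of a positive-density subfamily escape $B_R$, the excluded tuples have intersections missing $p$ and can be peeled off and re-centered at a coarser scale. Doignon's theorem can then be invoked to certify that within any bounded lattice region the pool of candidate witnesses is finite and uniformly bounded by a constant. This localization is the genuinely novel content of the argument; once it is in place, the combinatorial pigeonhole of the first paragraph completes the proof.
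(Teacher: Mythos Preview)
The paper does not prove this theorem; it is quoted from B\'ar\'any and Matou\v{s}ek as an external result and used as a black box. So there is no in-paper proof to compare against, and your attempt must stand on its own.

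The double-count in your first paragraph is correct once localization holds, but the localization step is a genuine gap, and you essentially acknowledge as much. The continuous fractional Helly theorem hands you a real point $p$ common to a subfamily $\mathcal F_1$ of size $\beta_1|\mathcal F|$, but nothing forces the lattice witnesses of good $(d+1)$-tuples to lie within any bounded distance of $p$. A convex set containing $p$ can have its nearest lattice point arbitrarily far away---a thin slab through $p$ at an irrational slope is the standard picture---so intersections of $d+1$ members of $\mathcal F_1$ can place their lattice witnesses at unbounded distance from $p$ even though each such intersection contains $p$. Your ``peel off and re-center at a coarser scale'' suggestion does not terminate: each iteration produces a new real center with no control on how the centers relate to one another or to the lattice, and there is no parameter that shrinks. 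Invoking Doignon's theorem here is a red herring: Doignon bounds the number of vertices of an empty lattice polytope, not the diameter of any region in which witnesses must live.

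The actual B\'ar\'any--Matou\v{s}ek argument is considerably more delicate. The surprise in their theorem is precisely that the \emph{fractional} Helly number for $\Z^d$ drops to $d+1$ from the ordinary Helly number $2^d$, and establishing this requires structural information about convex lattice sets (in particular, a bound on how many combinatorially distinct such sets can be carved out by a family of convex bodies) that is not accessible by combining continuous fractional Helly with a pigeonhole over a bounded box.
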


The B\'ar\'any-Matou\v{s}ek theorem produces a fractional theorem that complements propositions \ref{thm:lower-Helly-number1} and \ref{thm:lower-Helly-number2}.

\begin{corollary}
For every $d\geq 1$ and $\alpha \in (0,1]$, there exists a $\beta > 0$ such that the following is true. If $\bigcap \mathcal H$ contains a box with at least $n$ lattice points for at least $\alpha \binom{\lvert \mathcal F\rvert}{2d}$ subfamilies $\mathcal H \subseteq \mathcal F$ of size $2d$, then there is a collection $\mathcal G \subseteq \mathcal F$ with at least $\beta \lvert \mathcal F\rvert$ sets whose intersection contains a box with at least $n$ lattice points.
\end{corollary}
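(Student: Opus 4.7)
The plan is to apply the projection-and-lift machinery developed in the proof of Proposition 2.2, in order to reduce to a setting where the Bárány-Matoušek theorem (Theorem 2.11) applies cleanly. First I would pass to the parameter space: set $f(x,y) = \prod_{i=1}^d (y_i - x_i + 1)$ and $V = \{(x,y) \in \R^{2d} : y \geq x,\ f(x,y) \geq n\}$, which is convex because $f$ is logconcave. For each $F \in \mathcal F$, the set $B_F \cap V \subseteq \R^{2d}$ is convex, and the standard equivalence used throughout Section 2 says that $\bigcap \mathcal H$ contains a box with at least $n$ lattice points if and only if $\bigcap_{F \in \mathcal H}(B_F \cap V)$ contains a point of $\Z^{2d}$.

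Next I would project along one coordinate $e_k$ (any $k \in [2d]$ works) to obtain the family $\mathcal F' = \{\pi_k(B_F \cap V)\}_{F \in \mathcal F}$ of convex subsets of $\R^{2d-1}$. Projecting any lattice point in $\bigcap_{F \in \mathcal H}(B_F \cap V) \cap \Z^{2d}$ produces a point of $\Z^{2d-1} = \pi_k(\Z^{2d})$ in $\bigcap_{F \in \mathcal H}\pi_k(B_F \cap V)$, so the hypothesis yields at least $\alpha \binom{|\mathcal F|}{2d}$ subfamilies $\mathcal H$ of size $2d$ whose projected intersections contain a lattice point. Since $2d = (2d-1)+1$, this is exactly the hypothesis of the Bárány-Matoušek theorem in ambient dimension $2d-1$; applying it produces a $\beta = \beta(d,\alpha) > 0$ and a subfamily $\mathcal G \subseteq \mathcal F$ with $|\mathcal G| \geq \beta |\mathcal F|$ such that $\bigcap_{F \in \mathcal G} \pi_k(B_F \cap V)$ contains some lattice point $a \in \Z^{2d-1}$.

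Finally I would lift $a$ back to a lattice point in $\bigcap_{F \in \mathcal G}(B_F \cap V) \cap \Z^{2d}$ using the fiber-ordering trick from Proposition 2.2: for each $F \in \mathcal G$ pick a lattice preimage $a_F \in B_F \cap V \cap \Z^{2d}$ of $a$; the boxes $\bx(a_F)$ agree in all parameters except the $k$-th, so they are linearly ordered by inclusion, and a minimal one $\bx(a_{F^*})$ is contained in every $F \in \mathcal G$. Since $a_{F^*} \in V \cap \Z^{2d}$, the box $\bx(a_{F^*}) \subseteq \bigcap \mathcal G$ carries at least $n$ lattice points, which is what we want. The only step requiring genuine care is the lattice-preimage assertion that already appears inside Proposition 2.2; once it is granted, the dimension count $(2d-1)+1 = 2d$ makes the match between the $2d$-tuple hypothesis and Bárány-Matoušek in $\R^{2d-1}$ entirely automatic, and no additional loss beyond the B\'ar\'any-Matou\v{s}ek constant is incurred.
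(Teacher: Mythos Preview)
Your proposal is correct and follows essentially the same approach as the paper. The paper's own proof consists of a single sentence: it mimics the proof of \cref{thm:colorful-box-helly-integer} by substituting the B\'ar\'any--Matou\v{s}ek theorem for the colorful Doignon theorem in the application of \cref{thm:box-weighting-min-concave}; you have unpacked exactly that substitution, with the correct logconcave weight $f$, the correct projection to $\R^{2d-1}$ so that the $2d$-tuple hypothesis matches the $(2d{-}1){+}1$ needed by B\'ar\'any--Matou\v{s}ek, and the same fiber-ordering lift.
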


The proof mimics that of \cref{thm:colorful-box-helly-integer}, substituting \cref{thm:barany-matousek} for the colorful Helly theorem in the application of  \cref{thm:box-weighting-min-concave}.

\section{Integer sets with infinite Helly number}

We begin with a lower bound for the Helly number of product sets in the plane.

\begin{proposition}\label{thm:product-sets-lower-bound}
Given a discrete set $A \subseteq \R$, list its elements in increasing order as $\cdots < a_{-1} < a_0 < a_1 < \cdots$ and define
\[ t(n) = \frac{a_{n+2} - a_{n+1}}{a_{n+1} - a_n}. \]
If there exist $b \in \Z$ and $m \in \N$ so that $t(n) \geq t(n+1)$ for every $b \leq n \leq b+m-1$ and strict inequality holds for $k$ values of $n$, then $H(A^2) \geq k+4$.
\end{proposition}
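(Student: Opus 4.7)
The plan is to invoke Hoffman's theorem (\cref{thm:empty-polytopes}), which for a discrete set $S \subseteq \R^2$ characterizes $H(S)$ as the maximum number of vertices of an empty convex polygon with vertices in $S$. It therefore suffices to exhibit an empty convex polygon in $A^2$ with $k+4$ vertices. Without loss of generality take $b = 0$; the case $m = 0$ gives the trivial bound $H(A^2) \geq 4$, so assume $m \geq 1$.

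The candidate polygon $P$ is the convex hull of the $m+2$ \emph{staircase} points $(a_i, a_{i+1})$ for $i = 0, 1, \ldots, m+1$ together with the two \emph{diagonal} points $(a_1, a_1)$ and $(a_{m+1}, a_{m+1})$. In counterclockwise order these are
\[
(a_0, a_1),\ (a_1, a_1),\ (a_{m+1}, a_{m+1}),\ (a_{m+1}, a_{m+2}),\ (a_m, a_{m+1}),\ \ldots,\ (a_1, a_2),
\]
so the boundary of $P$ consists of a short horizontal edge at height $a_1$, a segment of the line $y = x$ from $(a_1, a_1)$ to $(a_{m+1}, a_{m+1})$, a short vertical edge at $x = a_{m+1}$, and the concave staircase descending back to $(a_0, a_1)$.

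I would first check the vertex count. The four ``corner'' vertices $(a_0, a_1), (a_1, a_1), (a_{m+1}, a_{m+1}), (a_{m+1}, a_{m+2})$ are always strict extreme points of $P$, via direct cross-product computations that use only the strict monotonicity of $a_n$. At a staircase-interior vertex $(a_i, a_{i+1})$ with $1 \leq i \leq m$, the cross product of the incoming and outgoing edges works out to $g_i^2 - g_{i-1}g_{i+1}$, which by hypothesis is $\geq 0$ and is strictly positive exactly when $t(i-1) > t(i)$. Therefore exactly $k$ of the $m$ interior staircase vertices are strict extreme points; the remaining $m - k$ lie on collinear edges and fall out of the convex hull. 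The hull $P$ thus has precisely $k+4$ vertices.

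The last step is the emptiness check, which I expect to follow cleanly from the way the diagonal was incorporated. The polygon lies in the vertical strip $x \in [a_0, a_{m+1}]$, so only $A^2$ points with $x$-coordinate in $\{a_0, a_1, \ldots, a_{m+1}\}$ can possibly lie inside. A short case analysis shows the $y$-section of $P$ at $x = a_i$ is the single point $\{a_1\}$ when $i = 0$ and the closed interval $[a_i, a_{i+1}]$ when $1 \leq i \leq m+1$, whose open interior $(a_i, a_{i+1})$ meets $A$ trivially because $a_i$ and $a_{i+1}$ are consecutive. The main obstacle to arriving at this proof is spotting the right polygon: closing off the staircase with the ``rectangular'' vertices $(a_0, a_0)$ and $(a_{m+1}, a_0)$ — the first thing one tries — drops the diagonal points $(a_i, a_i)$ into the interior and breaks emptiness, while anchoring the polygon on the line $y = x$ itself places those would-be interior points on the boundary, where they cause no harm. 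With the construction in hand, applying Hoffman's theorem to $P$ yields $H(A^2) \geq k+4$.
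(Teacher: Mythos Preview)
Your proof is correct and follows essentially the same approach as the paper: both construct an empty convex $(k+4)$-gon in $A^2$ by taking the staircase points $(a_i,a_{i+1})$ and closing them off along the diagonal $y=x$, using the monotonicity of $t(n)$ to control convexity and the consecutiveness of $a_i,a_{i+1}$ to ensure emptiness. The only cosmetic difference is the choice of diagonal anchors---the paper uses $(a_b,a_b)$ and $(a_{b+m+2},a_{b+m+2})$ (giving a vertical edge on the left and a horizontal edge on the right) whereas you use $(a_1,a_1)$ and $(a_{m+1},a_{m+1})$ (horizontal left, vertical right)---but both choices yield an empty $(k+4)$-gon and the argument is otherwise identical.
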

\begin{proof}
We define
\begin{equation}\label{eq:3-1}
    P = \conv\!\big( \{(a_b,a_b),(a_{b+m+2},a_{b+m+2})\} \cup \{(a_i,a_{i+1}) : b \leq i \leq b+m+1\}\big).
\end{equation}
The polygon $P$ contains no other points of $A^2$ in its interior. The condition that $t(n) \geq t(n+1)$ guarantees that the slopes of the lines connecting consecutive points of the form $(a_i,a_{i+1})$ are nonincreasing, so the points defining $P$ in \eqref{eq:3-1} are all on the boundary of $P$. If $t(n) > t(n+1)$, then $(a_{n+1},a_{n+2})$ is a vertex of $P$. The condition on strict inequality guarantees that $P$ has $k+4$ vertices (the $k$ where strict inequality holds, plus the points $(a_b,a_b),\, (a_b,a_{b+1}),\, (a_{b+m},a_{b+m+1}),$ and $(a_{b+m+1},a_{b+m+1})$). By \cref{thm:empty-polytopes}, $H(A^2) \geq k+4$.
\end{proof}

A computer search of the values of $t(n)$ for the prime lattice in the interval $[0,10^6]$ shows that $H(\mathcal P \times \mathcal P) \geq 13$.\footnote{The empty 13-gon on the main diagonal of $\mathcal P \times \mathcal P$ whose vertex $x$-coordinates are the consecutive primes 258500509, 258500527, 258500549, 258500569, 258500587, 258500603, 258500617, 258500629, 258500639, 258500647, 258500651, and 258500659 is convex.} 

A Helly-type theorem for $A^d$ in $\R^d$ implies a corresponding Helly-type theorem for $A^k$ in $\R^k$ for all $1 \leq k \leq d$. Consequently, the lack of such a theorem in the plane implies the lack of one in all higher dimensions.

\begin{corollary}\label{thm:polynomial-infinite-Helly}
If $p$ is a polynomial with degree at least 2 and $A = \{ p(n) : n \in \Z\}$, then $H(A^d) = \infty$ for all $d \geq 2$.
\end{corollary}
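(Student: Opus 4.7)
The plan is to reduce the problem to the two-dimensional case and then apply \cref{thm:product-sets-lower-bound}. The observation preceding the corollary shows that $H(A^d) < \infty$ would force $H(A^2) < \infty$, so it suffices to prove $H(A^2) = \infty$. By \cref{thm:product-sets-lower-bound}, this further reduces to exhibiting, for arbitrarily large $k$, an index $b$ along which
\[
    t(b) > t(b+1) > \cdots > t(b+k);
\]
the proposition then yields $H(A^2) \geq k + 4$, and letting $k \to \infty$ finishes the argument.

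After a reflection if necessary, assume $p$ has positive leading coefficient (note $H(A^2) = H((-A)^2)$ since convex sets are preserved by $(x,y) \mapsto (-x,-y)$). Then $p$ is strictly increasing on some tail $[M,\infty)$ with $\lim_{j \to \infty} p(j) = +\infty$, so there is an integer $c$ such that $a_n = p(n + c)$ for every sufficiently large $n$. Writing $\Delta(j) = p(j+1) - p(j)$---a polynomial of degree $D - 1 \geq 1$ that is eventually positive and strictly increasing---one computes $t(n) = \Delta(n+c+1)/\Delta(n+c) =: \phi(n+c)$ in this tail.

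The crux is therefore to show that $\phi$ is strictly decreasing on some tail. From
\[
    \phi(x+1) - \phi(x) \;=\; \frac{\Delta(x+2)\Delta(x) - \Delta(x+1)^2}{\Delta(x+1)\Delta(x)},
\]
the denominator is positive for large $x$, and expanding $\Delta(j) = \gamma j^{D-1} + O(j^{D-2})$ reveals that the numerator has leading term $-\gamma^2(D-1)x^{2D-4}$, which is strictly negative. (Equivalently, $\phi(x) = 1 + (D-1)/x + O(1/x^2)$, a function that is eventually strictly decreasing in $x$.) Hence for every $k$ one can choose $b$ large enough that $t(b) > \cdots > t(b+k)$, completing the proof. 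The only genuinely delicate point is this asymptotic sign calculation; the remaining steps---the \emph{WLOG} reduction on the leading coefficient, matching up the tail of $A$ with $\{p(j) : j \geq M\}$, and lifting from $d = 2$ to $d \geq 2$---are essentially bookkeeping.
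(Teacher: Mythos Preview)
Your argument follows the paper's proof essentially step for step: reduce to $d=2$, normalize the leading coefficient, identify $a_n$ with $p(n+c)$ on a tail, and then show via an asymptotic expansion that $t(n)$ is eventually strictly decreasing so that \cref{thm:product-sets-lower-bound} applies with arbitrarily large $k$. Your explicit computation of the leading term $-\gamma^2(D-1)x^{2D-4}$ of $\Delta(x+2)\Delta(x)-\Delta(x+1)^2$ is in fact a touch more careful than the paper's version of the same asymptotic.
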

\begin{proof}
It suffices to prove that $H(A^2) = \infty$. We may assume that $p$ is monic, since multiplying $p$ by a nonzero scalar is an affine transformation of $A^2$ and preserves the Helly number. The polynomial $p(n)$ is eventually increasing, which means that we may shift the indices so that $a_n = p(n)$ for large enough $n$. Substituting $p(n) = n^k + an^{k-1} + O(n^{k-2})$ in the expression for $t(n)$ yields
\[  t(n)
    = 1 + \frac{p(n+2) - p(n+1) +p(n)}{p(n+1) - p(n)}
    = 1 + \frac{O(n^{k-2})}{kn^{k-1} + O(n^{k-2})}. \]
It follows that $t(n)$ is eventually strictly decreasing, and \cref{thm:product-sets-lower-bound} implies that $H(A^2)$ is not finite.
\end{proof}

\begin{corollary}
Let $A_k = \{n^k : n \in \Z\}$ and $B_k = \{ \binom{n}{k} : n \in \N\}$. If $d,k \geq 2$, then $h(A_k^d) = h(B_k^d) = \infty$.
\end{corollary}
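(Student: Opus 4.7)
The plan is to reduce both claims directly to Corollary~\ref{thm:polynomial-infinite-Helly}. For $A_k$, this is immediate: taking $p(n) = n^k$, which is a polynomial of degree $k \geq 2$, we have $A_k = \{p(n) : n \in \Z\}$, so Corollary~\ref{thm:polynomial-infinite-Helly} applies verbatim to give $H(A_k^d) = \infty$ for every $d \geq 2$.

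For $B_k$, the key observation is that $\binom{n}{k} = n(n-1)\cdots(n-k+1)/k!$ is a polynomial in $n$ of degree $k \geq 2$, so every element of $B_k$ is of the form $p(n)$ for $n \in \N$ and a polynomial $p$ of degree at least $2$. The only formal discrepancy from the $A_k$ case is that $B_k$ is indexed over $\N$ rather than $\Z$, but the proof of Corollary~\ref{thm:polynomial-infinite-Helly} depends on the polynomial structure only asymptotically: the computation $t(n) = 1 + O(n^{k-2})/(kn^{k-1} + O(n^{k-2}))$ shows that $t(n)$ is eventually strictly decreasing, and a subsequent appeal to Proposition~\ref{thm:product-sets-lower-bound} produces, for each $m$, a block of consecutive indices in this decreasing tail and hence an empty polygon with $m+4$ vertices in $B_k^2$. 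This forces $H(B_k^2) = \infty$, and the monotonicity observation preceding Corollary~\ref{thm:polynomial-infinite-Helly} extends this to $H(B_k^d) = \infty$ for every $d \geq 2$.

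Since the argument is essentially a direct application of Corollary~\ref{thm:polynomial-infinite-Helly} in both cases, there is no serious obstacle. The only point meriting a moment of attention is confirming that the one-sided indexing of $B_k$ causes no trouble, which amounts to noticing that Proposition~\ref{thm:product-sets-lower-bound} needs only $m$ consecutive indices $n \geq b$ with the prescribed monotonicity of $t(n)$---a condition guaranteed for arbitrarily large $m$ by the eventual strict decrease of $t(n)$ in the polynomial case.
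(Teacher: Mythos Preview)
Your proposal is correct and matches the paper's approach: the paper states this corollary immediately after \cref{thm:polynomial-infinite-Helly} with no proof, treating it as an immediate consequence, and your argument spells out exactly that reduction. Your remark that the one-sided indexing of $B_k$ is harmless because the proof of \cref{thm:polynomial-infinite-Helly} uses only the eventual strict decrease of $t(n)$ is the right way to dispatch the only possible worry.
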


Unfortunately, there are some sets for which \cref{thm:product-sets-lower-bound} gives no information whatsoever. For example, if $A = \{2^n : n \in \N\}$, then $t(n) = 2$ for every $n$.

\begin{question}
What is $H(\{2^n : n \in \N\}^2)$?
\end{question}

Given a set $A \subseteq \Z$, let $\delta(A) = \liminf_{n\to\infty} \big\lvert A \cap [-n,n]\big\rvert / (2n+1)$ be its \textit{lower density} in $\Z$. It seems intuitive that results in \cref{thm:polynomial-infinite-Helly} are attributable to the sparseness of $A$; that is, that $\delta(A) = 0$. We can easily construct a set $A \subseteq \Z$ with positive density such that $H(A^2) = \infty$ using \cref{thm:polynomial-infinite-Helly}: Simply intersperse intervals of $\Z$ with well-chosen intervals of $\{n^3 : n \in \N\}$ to create a set $A$ so that $A^2$ contains arbitrarily large empty polygons, but (if the intervals of $\Z$ are long enough) has positive lower density.

The resulting set has infinite Helly number because it is not dense ``everywhere''---there are still arbitrarily large gaps between consecutive elements. A set $A \subseteq \N$ is called \textit{$\ell$-syndetic} if the differences between consecutive elements of $A$ are bounded above by $\ell$. If $A$ is syndetic, then it is, in some sense, dense everywhere. It therefore seems plausible that $H(A^2)$ is finite when $A$ is a syndetic set, but in this we are foiled yet again: Surprisingly, bounded gaps are still insufficient to guarantee a finite Helly number.

To prove \cref{thm:syndetic-set-infinite-helly}, we will use Dirichlet's theorem on rational approximation.

\begin{theorem}[Dirichlet \cite{diophantine-approx}, 1842]
If $\alpha \in \R$ is irrational, then there are infinitely many coprime integers $p$ and $q$ so that $\lvert q\alpha - p\rvert \leq 1/q$.
\end{theorem}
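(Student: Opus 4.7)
The plan is to give the classical pigeonhole proof. First I would fix an integer $N \geq 1$ and consider the $N+1$ fractional parts $\{0\cdot\alpha\}, \{1\cdot\alpha\}, \dots, \{N\alpha\}$ in $[0,1)$. Partitioning $[0,1)$ into the $N$ half-open subintervals of length $1/N$, the pigeonhole principle produces indices $0 \leq i < j \leq N$ whose fractional parts lie in a common subinterval. Setting $q = j - i$ and $p = \lfloor j\alpha\rfloor - \lfloor i\alpha \rfloor$, the difference $|q\alpha - p| = |\{j\alpha\} - \{i\alpha\}|$ is less than $1/N$, and since $1 \leq q \leq N$, this gives $|q\alpha - p| < 1/N \leq 1/q$.

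Next I would promote this single approximation to infinitely many. Suppose, for contradiction, that only finitely many pairs $(p_1,q_1),\dots,(p_r,q_r)$ satisfy the inequality. Because $\alpha$ is irrational, each number $\eta_k := |q_k\alpha - p_k|$ is strictly positive, so $\eta := \min_k \eta_k > 0$. Choosing $N > 1/\eta$ and running the pigeonhole argument again yields a pair $(p,q)$ with $|q\alpha - p| < 1/N < \eta$; this contradicts the minimality of $\eta$, so infinitely many pairs exist.

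The last step is to upgrade to coprime pairs. Given any solution $(p,q)$ with $d = \gcd(p,q) > 1$, write $p = dp'$ and $q = dq'$. Then
\[
|q'\alpha - p'| = \tfrac{1}{d}|q\alpha - p| \leq \tfrac{1}{dq} = \tfrac{1}{d^2 q'} \leq \tfrac{1}{q'},
\]
so the reduced pair $(p',q')$ also satisfies the inequality. Since the assignment $(p,q) \mapsto (p',q')$ sends only finitely many pairs to each reduced pair (those obtained by multiplying $(p',q')$ by some $d \in \N$, and only pairs with $q \leq$ the current $N$ can arise in a fixed pigeonhole round), the infinite family of solutions produced above must contain infinitely many distinct coprime pairs.

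The only subtle point is the second step: one must use irrationality to prevent the pigeonhole solutions from collapsing onto a single pair repeatedly. Once that is handled via the strict positivity of $|q\alpha - p|$, the remaining manipulations are elementary.
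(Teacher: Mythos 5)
The paper does not prove this statement; it is quoted as a classical theorem of Dirichlet and used as a black box in the proof of Theorem 1.6, so there is nothing internal to compare against. Your pigeonhole argument is the standard proof and is essentially correct: the first two steps (the single-$N$ pigeonhole construction and the bootstrapping to infinitely many solutions via the strict positivity of $\lvert q\alpha - p\rvert$ for irrational $\alpha$) are clean. The one place where your justification wobbles is the coprimality upgrade. The reduction map $(p,q)\mapsto(p',q')$ is \emph{not} finite-to-one on all integer pairs (every $(dp',dq')$, $d\in\N$, reduces to $(p',q')$), and the parenthetical about ``the current $N$'' does not repair this, since the infinitely many solutions come from unboundedly many rounds. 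The correct reason it is finite-to-one \emph{on the set of solutions} is again irrationality: if $(dp_0,dq_0)$ is a solution with $(p_0,q_0)$ coprime, then $\lvert q_0\alpha - p_0\rvert \leq 1/(d^2q_0^2)$, and since $c := \lvert q_0\alpha - p_0\rvert > 0$ this forces $d^2 \leq 1/(cq_0^2)$, bounding $d$. With that one-line patch, each coprime pair absorbs only finitely many of your infinitely many solutions, so infinitely many distinct coprime solutions must occur, and the proof is complete.
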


\begin{proof}[Proof of \cref{thm:syndetic-set-infinite-helly}]
The basic outline is that we cheat. We first construct a set of polygons, and then we build up a syndetic point set around them to guarantee they are empty, ensuring that the Helly number is infinite.

Fix an irrational number $\alpha > 2$. Let $L$ be the line through the origin with slope $\alpha$ and $M$ be the strip $L + \big(\{0\} \times [0,1)\big)$. (Here, $+$ denotes the Minkowski sum.) We first construct an infinite set of integer polygons.

Let $(q_i,p_i)_{i=1}^\infty$ be a sequence of pairs of positive integers so that $0 < \lvert p_i - q_i\alpha\rvert < 1/q_i$. The quantity $p_i - q_i\alpha$ is either positive for infinitely many values of $i$ or negative for infinitely many values of $i$. Since the arguments are symmetric, we suppose that the former is the case and choose a subsequence so that $p_i -q_i\alpha > 0$ for every $i \in \N$. The fractions $p_i/q_i$ tend toward $\alpha$, so we may take a further subsequence such that $p_i/q_i > p_{i+1}/q_{i+1}$ for every $i \in \N$.

For each $n \in \N$, choose some index $j$ so that $q_j > n$, set $v^n_k = \sum_{i=j}^{j+k} (q_i,p_i)$ for every integer $0 \leq i \leq n-1$, and define $P_n = \conv(0,v^n_0, \dots, v^n_{n-1})$. The condition on decreasing quotients guarantees that $P_n$ has exactly $n+1$ vertices, namely $\{0,v^n_0,\dots,v^n_{n-1}\}$. Moreover, since $0 < p_i - q_i\alpha < 1/q_i < 1/n$, every vector $v^n_i$ is contained in $M$, showing that $P_n \subseteq M$.

Let $\pi_1,\pi_2\colon \R^2 \to \R$ be the orthogonal projections onto the $x$- and $y$-axes, respectively. We translate each polygon $P_n$ by an integer vector $w_n$ such that any two projections in $\{\pi_1(P_n+w_n) : n \in \N\} \cup \{\pi_2(P_n+w_n) : n \in \N\}$ are disjoint; let $Q_n = P_n + w_n$. Defining $B = \bigcup_{n=1}^\infty \big(\pi_1(\operatorname{vert}(Q_n)) \cup \pi_2(\operatorname{vert}(Q_n))\big)$, each $Q_n$ is an empty polygon in $B^2$.

To finish the proof, we construct a 2-syndetic set $A$ containing $B$ so that each $Q_n$ remains empty in $A^2$. For each $m \in \Z$, exactly one of the following holds:
\begin{enumerate}[label=(\arabic*),nolistsep]
    \item $m \in \pi_1(Q_n)$ for exactly one $n$,
    \item $m \in \pi_2(Q_n)$ for exactly one $n$, or
    \item $m \in \N \setminus \bigcup_{n=1}^\infty \big(\pi_1(Q_n) \cup \pi_2(Q_n)\big)$.
\end{enumerate}
Let us denote the set of integers that satisfy (1), (2), and (3) by $\Pi_1$, $\Pi_2$, and $\Pi_3$, respectively. Finally, we set $M_n = (M + w_n) \cap \big(\pi_1(Q_n) \times \pi_2(Q_n)\big)$, so $Q_n \subseteq M_n$.

The vertical width of $M_n$ is 1 and the slope of the strip is an irrational number strictly greater than 2. This means that if $m \in \pi_2(M_n \cap \Z^2)$, then $m+1 \notin \pi_2(M_n \cap \Z^2)$. The set
\[ A := \Pi_1 \cup \Big(\Pi_2 \setminus \big(\bigcup_{n=1}^\infty \pi_2(M_n \cap \Z^2)\big)\Big) \cup \Pi_3 \cup B \]
is therefore 2-syndetic. Since $M_n \cap A^2 = M_n\cap B^2$, the polygons $Q_n$ are empty in $A^2$. Thus $A^2$ contains arbitrarily large empty polygons, so $H(A^2) = \infty$ by \cref{thm:empty-polytopes}.
\end{proof}
\vspace{3ex}

\noindent
{\large \textbf{Acknowledgments}}\\
This research project was completed as part of the 2020 Baruch Discrete Mathematics REU, supported by NSF awards DMS-1802059, DMS-1851420, and DMS-1953141. The author thanks Pablo Sober\'on especially for his guidance and support throughout this project.
\vspace{3ex}

\bibliography{bibliography.bib}
\bibliographystyle{amsplain-nodash}
\end{document}